\title{Feedback theory approach to positivity and stability of evolution equations}
\author{Abed Boulouz, Hamid Bounit and Said Hadd}
\date{September 14, 2020}
\address{Department of Mathematics, Faculty of Sciences, Ibn Zohr University, Hay Dakhla, BP8106, Agadir, Morocco; abed.boulouz@edu.uiz.ac.ma, h.bounit@uiz.ac.ma, s.hadd@uiz.ac.ma}
\thanks{}
\keywords{Positive semigroups; Feedback theory; Control and observation operators; Exponential stability; Hyperbolic systems}
\chardef\bslash=`\\ 
\newtheorem{theorem}{Theorem}[section]
\newtheorem{lemma}[theorem]{Lemma}
\newtheorem{corollary}[theorem]{Corollary}
\newtheorem{proposition}[theorem]{Proposition}
\theoremstyle{definition}
\newtheorem{definition}[theorem]{Definition}
\newtheorem{example}[theorem]{Example}
\theoremstyle{remark}
\newtheorem{remark}{Remark}
\def\al{\alpha}
\def\la{\lambda}
\def\calL{{\mathcal{L}}}
\def\calA{{\mathcal{A}}}
\begin{document}
\maketitle
\renewcommand{\sectionmark}[1]{}
\begin{abstract}
In this paper, we study the positivity and (uniform) exponential stability of a large class of perturbed semigroups. Our approach is essentially based on the feedback theory of infinite-dimensional linear systems. The obtained results are applied to the stability of hyperbolic systems including those with a delay at the boundary conditions.
\end{abstract}
\section{Introduction}\label{sec:1}

In this paper, we introduce a new approach based on feedback theory of infinite-dimensional linear systems to prove positivity and stability of perturbed semigroups in Banach lattices. We note that  positivity of semigroups plays a key role in the study of the asymptotic behavior of Cauchy problems. The latter model problems in many areas such as physics and biology(see e.g \cite{Banasiak Arlotti},\cite{Boulouz Bounit Hadd},\cite{Gwizdz Tyran-Kaminska},\cite{Hadd Rhandi},\cite{Rhandi A}, \cite{Rhandi},\cite{Zhang Prieur} and references therein).

In this work, we consider the following abstract perturbed boundary value problem
\begin{equation}\label{absract boundary}
\left\{
\begin{array}{ll}
\dot{z}(t)= A_m z(t), & \hbox{$t\geq 0$;} \\
Gz(t)=Mz(t), & \hbox{$t\geq 0$;} \\
z(0)=z. & \hbox{}
\end{array}
\right.
\end{equation}
Hereafter, $A_m:Z\longrightarrow X $ is a linear closed operators in a Banach space $X$ with maximal domain $D(A_m):=Z\hookrightarrow X,$ and $G,M:Z\longrightarrow U$ are linear operators (here $U$ is an another Banach space). Observe that the abstract boundary system  \eqref{absract boundary} can be converted into the following Cauchy problem
\begin{equation}\label{perturbed Cauchy}
\left\{
\begin{array}{ll}
\dot{z}(t)= \calA z(t), & \hbox{$t\geq 0$;} \\
z(0)=z, & \hbox{}
\end{array}
\right.
\end{equation}
where
\begin{align}\label{generator}
\calA:=A_m,\quad D(\calA):=\{z\in Z: Gz=Mz\}.
\end{align}
Types of problems like  \eqref{absract boundary} were first studied by Greiner \cite{Greiner} and Salamon \cite{Salamon} in the case when $M$ is bounded, i.e. $M\in\mathcal{L}(X,U)$. We mention that the case of unbounded perturbation $M:Z\subsetneq X\longrightarrow U$ was recently  studied in \cite{Hadd Manzo Ghandi}, where feedback theory of regular linear systems developed by \cite{Staffans} and \cite{Weiss} is extensively used.  An example of \eqref{absract boundary} with unbounded $M$, one can consider neutral equations investigated in \cite{Hadd Rhandi}. In fact, in \cite{Hadd Rhandi}, the authors proved the well-posedness and positivity of such equations. In the present work, we  prove the positivity and stability of the semigroup solutions of the general abstract problem \eqref{absract boundary}.

More recently, the authors of \cite{Gwizdz Tyran-Kaminska} consider a particular case of \eqref{absract boundary} and prove that the positivity in the space $L^1$ using background in \cite{Arendt} and \cite{Thieme}. The paper \cite{Gwizdz Tyran-Kaminska} extended the well-known results stated by  Desch \cite{Desch} and Voigt \cite{Voigt} in AL-spaces. Similar results in AM-spaces was proved in \cite{Batkai Jacob Voigt Wintermayr}. We note that all these results do not include reflexive spaces. This why we will introduce a new approach based on feedback theory of regular linear systems to treat the case of general Banach lattices.

As an application, we present an important class of systems called hyperbolic systems. We mention that there  is a huge literature dealing with linear and non-linear hyperbolic systems (see e.g \cite{Bounit, Barbu,Bastin coron,Caldeira Prieur,Castilo,Diagne,Neves,Peralta,Peralta1,coron bastin novel} and references therein). In particular, we are interested in the following linear hyperbolic partial differential equation with positive boundary conditions
\begin{align}\label{positive hyperbolic}
\begin{cases}
{\displaystyle\frac{\partial y(x,t)}{\partial t}}+D {\displaystyle\frac{\partial y(x,t)}{\partial t}}=0,&x\geq 0,x\in [0,1],\\
y(0,t)=K y(1,t)&t\geq 0,\\
u(x,0)=u_0(x),&x\in [0,1],
\end{cases}
\end{align}
where $y(t,x)=(y_1(t,x),y_2(t,x),...,y_n(t,x))$, $D=diag\{d_1,d_2,...,d_n\}$ with the velocities $d_i>0$, for $i\in \{1,2,...,n \}$ and $K=(K_{ij})_{i,j=1,...,n}\in \mathbb{R}_+^{n\times n}$. We recall that the system \eqref{positive hyperbolic} is called positive if the trajectories of this system starting from any positive initial conditions remain positives.  We mention that the authors of  \cite{Zhang Prieur}  have established the positivity and exponential stability of system \eqref{positive hyperbolic}.  More precisely, they gave  necessary and sufficient conditions which assure exponential stability under an extra-condition on velocities. Their method is based on a strict linear Lyapunov function. Our contribution here is to investigate the well-posedness of hyperbolic system \eqref{positivity of hyperbolic} using the theory of infinite-dimensional linear systems. Due to the riche properties of positive semigroups, we are able to avoid the extra assumption used in \cite{Zhang Prieur} to  prove the exponential stability for the system \eqref{positive hyperbolic}. However, a delay is frequently appeared in such systems due to a time lag between two actions. For this reason, we consider the hyperbolic system \eqref{positive hyperbolic} with boundary conditions depends of the history of the system (see Section 5). More precisely, we deal with the following system
\begin{align}\label{delay heperbolic 0}
\begin{cases}
\vspace{0.1cm}
\displaystyle\frac{\partial y(x,t)}{\partial t}+D\displaystyle\frac{\partial y(x,t)}{\partial x}=0,& 0<x<1,t\geq 0,\\
y(0,t)=\displaystyle\int_{0}^{1}\int_{-1}^{0}d\mu(\theta)y_t(x,\theta)dx,&t\geq 0,\\
y(x,\theta)=\varphi(\theta),&0<x<1, -1\leq \theta\leq 0,\\
y(x,0)=y_0(x),&0<x<1.
\end{cases}
\end{align}
where $\mu$ is a function of bounded variation. We will see that the system \eqref{delay heperbolic 0} admits a unique positive solution. Moreover, we establish the necessary and sufficient conditions on the uniform exponential stability of \eqref{delay heperbolic 0}. In particular, we prove that the stability of \eqref{delay heperbolic 0} is extremely responsive to small delays.

The paper is organized as follows. In Section 2, we collect some results on infinite-dimensional linear systems. In Section 3, we introduce results on positivity and stability of closed loop systems. Section 4 is devoted to study positive hyperbolic systems. The last section deals with hyperbolic systems with delay boundary conditions.

\section{Preliminaries}
In this section, we collect from \cite{Salamon,Staffans,Tucsnak Weiss,Weiss,Hadd Manzo Ghandi} some results on the feedback theory of regular linear systems combined with results on perturbed boundary value problems.

Here, the operators $A_m,G$ and $M$ are as in the introductory section. In addition, we consider the following standard hypotheses introduced in Greiner \cite{Greiner} and Salamon \cite{Salamon}.
\begin{itemize}
	\item[($\mathbf{H1}$)] The operator
	\begin{align}\label{initial generator}
	A:=A_m,\quad D(A)=\{z\in Z: Gz=0\}
	\end{align}
	generates a semigroup $(T(t))_{t\geq 0}$ on $X$. Here $A_m$ is a maximal operator as defined in \eqref{absract boundary}.
	\item[($\mathbf{H2}$)] The operator $G:Z\longrightarrow U$ is surjective.
\end{itemize}
With these hypothesis, Greiner \cite{Greiner} showed that the restriction
\begin{align*}
G_{|\ker(\la-A_m)}:\ker(\la-A_m)\longrightarrow  U.
\end{align*}
is invertible for $\la\in\rho(A)$ and its inverse
\begin{align*}
D_\la:U\longrightarrow \ker(\la-A_m) \subset Z\subset X
\end{align*}
is bounded from $U$ to $X$ (called Dirichlet operator).

We denote by $X_{-1,A}$ (sometimes we simply write $X_{-1}$) the completion of $X$ with respect to the norm $\|x\|_{-1}:=\|R(\la,A)x\|$, $x\in X$ for some (hence all) $\la\in\rho(A)$. We mention that the semigroup $(T(t))_{t\geq 0}$ can be extended to a strongly continuous semigroup $(T_{-1}(t))_{t\geq 0}$ on $X_{-1,A}$, whose generator $A_{-1}:D(A_{-1})=X \longrightarrow X_{-1,A}$ is the extension of $A$ to $X$ (we refer to Chapter II of \cite{Engel Nagel}).

Let us define the control and observation operators as follows:
\begin{align}\label{opertaorB}
B&:=(\lambda-A_{-1})D_{\lambda}\in \calL(U,X_{-1,A}) \quad  \text{for } \la\in \rho(A), \\  C&:=M_{|D(A)} \in \calL(D(A),U)\nonumber.
\end{align}

Define the control (or input) map associated to $A$ and $B$
\begin{equation*}
\Phi_{t}u:=\int_{0}^{t}T_{-1}(t-s)Bu(s)ds, \quad u\in L^{p}([0,+\infty),U),\quad t\geq 0,
\end{equation*}
which belongs to $X_{-1,A}$. By using an integration by parts, for any
\begin{align*}
u\in W^{2,p}_{0,loc}([0,+\infty),U):=\left\{v\in W^{2,p}_{loc}([0,+\infty),U):v(0)=0  \right\},
\end{align*}
\begin{align*}
\Phi_t u=D_0 u(t)-\int_{0}^{t}T(t-s)D_0 u'(s)ds \in Z,\quad t\geq 0.
\end{align*}
This allows us to define the input-output operator
\begin{align*}
(\mathbb{F} u)(t):= M \Phi_t u, \quad u\in W^{2,p}_{0,loc}([0,+\infty),U), a.e. t\geq 0.
\end{align*}
The following definition gives sense to the solution of the system \eqref{absract boundary} (see also \cite{Staffans} and \cite{Weiss}).
\begin{definition}\label{well-posed-defi}
	The system $\Sigma:=(A,B,C)$ is well-posed if
	\begin{itemize}
		\item[(i)] $B$ is an admissible control operator for $A$. Namely, there exists $t_{0}>0$ such that
		\begin{align*}
		\Phi_{t_{0}} u \in X,\quad \forall u\in L^{p}(\mathbb{R}^+,U).
		\end{align*}
		\item[(ii)] $C$ is an admissible observation operator for $A$. Namely, there exist some constants $\alpha>0$  and $\gamma:=\gamma(\alpha)>0$ such that
		\begin{align*}
		\int_{0}^{\alpha}\|CT(t)x\|^p dt \leq \gamma^p \|x\|^p, \quad \forall x\in D(A),
		\end{align*}
		\item[(iii)] There exist constants $\tau>0$ and $\kappa:=\kappa(\tau)>0$ such that
		\begin{align*}
		\| \mathbb{F} u\|_{L^p([0,\tau],U)}\leq \kappa\|u\|_{L^p([0,\tau],U)},\quad u\in W^{2,p}_{0,loc}([0,+\infty),U).
		\end{align*}		
	\end{itemize}
\end{definition}
By using the closed graph theorem one can see that the point ${\rm(i)}$ in Definition \ref{well-posed-defi} implies that
\begin{align*}
\Phi_t\in\mathcal{L}(L^{p}(\mathbb{R}^+,U),X),\qquad\forall t\ge 0.
\end{align*}
On the other hand, the assertions ${\rm(ii)}$ and ${\rm(iii)}$ in Definition \ref{well-posed-defi} hold also for any $\al>0$ and $\tau>0,$ respectively.

In \cite{Weiss}, Weiss introduce a subclass of a well-posed linear system $\Sigma=(A,B,C)$ as follows
\begin{definition}
	A well-posed system $\Sigma:=(A,B,C)$ is  called regular (with zero feedthrough) if for any $u\in U$, the following limit
	\begin{align*}
	\lim_{\tau \to 0}\int_{0}^{\tau}(\mathbb{F}(\chi_{\mathbb{R}^+}\cdot u))(\sigma) d\sigma=0
	\end{align*}
	exists in $U$, where $\chi_{\mathbb{R}^+}$ is a constant function equals to $1$ on $\mathbb{R}^+$.
\end{definition}
The following theorem shows a characterization of the regularity of a system (see \cite{Weiss} and \cite[Theorem 5.6.5]{Staffans})
\begin{theorem}\label{characterization regular}
	Let $\Sigma:=(A,B,C)$ be a well-posed system. The following statements are equivalent.
	\begin{itemize}
		\item $\Sigma$ is regular.
		\item	$Range (D_\la) \subset D(C_\Lambda) $, for $\la\in\rho(A)$, where the operator $C_\Lambda$ is the Yosida extension of $C$ defined by
		\begin{align*}
		C_\Lambda x &:=\lim_{\la\to +\infty}C\la R(\la,A)	x \\
		D(C_\Lambda)&:=\left\{x\in X:\lim_{\la\to +\infty}C\la R(\la,A)	x \text{ exists in } U \right\}.
		\end{align*}
		\item For any $u\in U$, $H(\lambda)u$ has a limit when $\lambda\to \infty$ (equals to $0$), where $H$ is the transfer function associated to $\Sigma$.
	\end{itemize}
	In this case, the transfer function $H$ is given by
	\begin{align*}
	H(\lambda):=MD_{\lambda}, \quad Re\lambda>w_{0}(A).
	\end{align*}
\end{theorem}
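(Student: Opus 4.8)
The plan is to deduce the three equivalences and the formula $H(\la)=MD_\la$ from Weiss's abstract characterisation of regularity for well-posed systems (\cite{Weiss}; see also \cite[Theorem~5.6.5]{Staffans}), once the concrete data $B=(\la-A_{-1})D_\la$ and $C=M_{|D(A)}$ of \eqref{opertaorB} are inserted. I would begin by recording two elementary facts about Dirichlet operators. From $B=(\la-A_{-1})D_\la$ one gets $R(\la,A_{-1})B=D_\la$ for all $\la\in\rho(A)$. Moreover, for $\la,\mu\in\rho(A)$ and $v\in U$, the vector $D_\la v-D_\mu v$ belongs to $\ker G=D(A)$ and satisfies $A(D_\la v-D_\mu v)=A_m(D_\la v-D_\mu v)=\la D_\la v-\mu D_\mu v$; applying $\mu-A$ yields
\begin{align}\label{dir-id}
D_\la v-D_\mu v=(\mu-\la)\,R(\mu,A)D_\la v .
\end{align}
Setting $\la=0$ in \eqref{dir-id} and renaming the variable (after a shift we may assume $0\in\rho(A)$, as the notation $D_0$ in Section~2 already presupposes) gives $\la R(\la,A)D_0=D_0-D_\la$, hence $-A R(\la,A)D_0=(I-\la R(\la,A))D_0=D_\la$.

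Next I would compute the transfer function directly and show $H(\la)=MD_\la$ on $\{\mathrm{Re}\,\la>w_0(A)\}$. For $u\in W^{2,p}_{0,loc}([0,\infty),U)$ of compact support, the closed form $\Phi_tu=D_0u(t)-\int_0^tT(t-s)D_0u'(s)\,ds$ together with $u(0)=0$ yields, upon taking Laplace transforms (legitimate since $t\mapsto\Phi_tu$ is exponentially bounded in $X$) and using the convolution theorem and $\widehat{u'}(\la)=\la\hat u(\la)$,
\begin{align*}
\widehat{\Phi u}(\la)=D_0\hat u(\la)-\la R(\la,A)D_0\,\hat u(\la)=\bigl(I-\la R(\la,A)\bigr)D_0\,\hat u(\la)=D_\la\,\hat u(\la) ,
\end{align*}
the last equality by the previous paragraph. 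Since $\Phi_tu\in Z=D(A_m)$ with locally bounded graph norm and $M\in\calL(D(A_m),U)$ for the graph norm, $M$ commutes with the Bochner integral defining the transform, so $\widehat{\mathbb{F}u}(\la)=M\widehat{\Phi u}(\la)=MD_\la\,\hat u(\la)$; comparing with the relation $\widehat{\mathbb{F}u}(\la)=H(\la)\hat u(\la)$ that defines $H$ gives $H(\la)=MD_\la$. (This holds for any well-posed $\Sigma$; regularity will enter only through the behaviour of $MD_\la$ as $\la\to\infty$.)

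It then remains to run Weiss's theorem: a well-posed system is regular if and only if $\mathrm{Ran}\bigl(R(\la,A_{-1})B\bigr)\subset D(C_\La)$, if and only if $\lim_{\la\to+\infty}H(\la)u$ exists in $U$ for every $u$, in which case $H(\la)=C_\La R(\la,A_{-1})B+D_\infty$ with $D_\infty$ the feedthrough. By $R(\la,A_{-1})B=D_\la$ this becomes: regular $\iff$ $\mathrm{Ran}(D_\la)\subset D(C_\La)$ $\iff$ $\lim_{\la\to\infty}MD_\la u$ exists; it only remains to identify $C_\La D_\la$ and to see $D_\infty=0$. For $v\in U$ put $x_\mu:=\mu R(\mu,A)D_\la v\in D(A)$; by \eqref{dir-id}, $x_\mu=\tfrac{\mu}{\mu-\la}(D_\la v-D_\mu v)$, whence
\begin{align*}
Cx_\mu=Mx_\mu=\frac{\mu}{\mu-\la}\,MD_\la v-\frac{\mu}{\mu-\la}\,MD_\mu v .
\end{align*}
As $\mu\to+\infty$ the first term tends to $MD_\la v$, so $C_\La D_\la v=\lim_\mu Cx_\mu$ exists precisely when $\lim_\mu MD_\mu v=\lim_\mu H(\mu)v$ does, and then $C_\La D_\la v=MD_\la v-\lim_\mu H(\mu)v$. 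Because the preceding definition of regularity (with zero feedthrough) forces $\lim_\mu H(\mu)v=0$, we conclude $C_\La D_\la v=MD_\la v$, hence $H(\la)=C_\La R(\la,A_{-1})B=C_\La D_\la=MD_\la$, which closes the equivalences and gives the stated formula.

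The main obstacle is the analytic bookkeeping around the operator $M$, which is unbounded on $X$: one must justify passing $M$ through the Bochner integral in the second paragraph and through the limit $\mu\to\infty$ in the third. Both are legitimate because the vectors in play ($\Phi_tu$, $x_\mu$) lie in $Z=D(A_m)$, on which $M$ is bounded for the graph norm --- and it is exactly here that regularity does real work, since for a general well-posed system $A_m x_\mu$ need not converge in $X$ and $\lim_\mu MD_\mu v$ need not exist. A secondary point of care is checking that the hypotheses of Weiss's abstract theorem are met in this boundary-control setting and that, via $R(\la,A_{-1})B=D_\la$, its conclusion really does collapse onto the concrete operators $C_\La D_\la$ and $MD_\la$.
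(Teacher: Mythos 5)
The paper does not actually prove this theorem: it is stated as a quoted result with a pointer to Weiss and to \cite[Theorem 5.6.5]{Staffans}, so there is no internal proof to compare yours against. What you have written is a correct and genuinely useful supplement, namely the translation dictionary that the paper leaves implicit: the identities $R(\mu,A_{-1})B=D_\mu$, $D_\la v-D_\mu v=(\mu-\la)R(\mu,A)D_\la v$, the Laplace-transform computation $\widehat{\Phi u}(\la)=D_\la\hat u(\la)$ giving $H(\la)=MD_\la$, and the clean calculation $C\mu R(\mu,A)D_\la v=\tfrac{\mu}{\mu-\la}(MD_\la v-MD_\mu v)$ showing that $D_\la v\in D(C_\Lambda)$ exactly when $\lim_\mu MD_\mu v$ exists, with $C_\Lambda D_\la=MD_\la$ in the zero-feedthrough case. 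These computations are all correct (and the last one shows in passing why the range condition for one $\la\in\rho(A)$ propagates to all of them, since $D_\la v-D_\mu v\in D(A)\subset D(C_\Lambda)$). What your argument does not do --- and should say so more explicitly --- is prove the core equivalences themselves: you still invoke Weiss's abstract theorem for ``regular $\iff$ $\mathrm{Ran}\,R(\la,A_{-1})B\subset D(C_\Lambda)$ $\iff$ $\lim_\la H(\la)u$ exists.'' The one genuinely non-elementary ingredient hiding there is the Tauberian step linking the Ces\`aro limit $\lim_{\tau\to0}\tau^{-1}\int_0^\tau(\mathbb{F}(\chi_{\mathbb{R}^+}u))(\sigma)\,d\sigma$ in Definition 2.2 to the Abel limit $\lim_{\la\to\infty}H(\la)u$ in item (iii); your sentence ``the preceding definition of regularity forces $\lim_\mu H(\mu)v=0$'' uses exactly this and it cannot be obtained from the resolvent algebra alone. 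Since the paper outsources the whole theorem, outsourcing only this step is an improvement, but it should be flagged as the point where the citation is doing real work.
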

\begin{definition}
	Let $\Sigma=(A,B,C)$ be a regular linear system. The identity operator $I:U\longrightarrow U$ is called an admissible feedback operator for $\Sigma$ if $I-\mathbb{F}$ has an inverse in $\mathcal{L}(L^p([0,\tau],U))$ for some $\tau>0$.
\end{definition}
It is worth noting that in the Hilbert setting, we may use function transfers instead of input-output operators for the definition of admissible feedback operators.

In the sequel, we consider the following assumption:
\begin{itemize}
	\item[($\mathbf{A1}$)] Assume that the conditions $(H1)$ and $(H2)$ is satisfied and that  $\Sigma=(A,B,C)$ is a regular linear system with $I:U\longrightarrow U$ is admissible feedback operator for $\Sigma$.
\end{itemize}

The following theorem presents a large class of perturbations called Staffans-Weiss, due to Weiss \cite{Weiss} (in Hilbert cases) and Staffans (in Banach cases).
\begin{theorem}
	Let the assumption ${\bf(A1)}$ be satisfied. Let us define the operator
	\begin{align}\label{Staffans Weiss}
	A^{cl}:=A_{-1}+BC_{\Lambda},\quad D(A^{cl}):=\left\{x\in D(C_{\Lambda}):(A_{-1}+BC_{\Lambda})x\in X\right\}.
	\end{align}	
	Then $A^{cl}$ generates a $C_0$-semigroup $(T^{cl}(t))_{t\geq 0}$ on $X$ satisfying $T^{cl}(s)x\in D(C_{\Lambda})$ for all $x\in X$, a.e $s\geq 0$ and
	\begin{align}\label{semigroup Staffans Weiss}
	T^{cl}(t)x=T(t)x+\int_{0}^{t}T_{-1}(t-s)BC_{\Lambda}T^{cl}(s)xds,
	\end{align}
	for all $x\in X$ and $t\geq 0$.
	
\end{theorem}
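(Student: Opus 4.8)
\medskip
\noindent\textbf{Proof proposal.}
The plan is to realise $(T^{cl}(t))_{t\ge0}$ as the state evolution of the closed loop obtained from $\Sigma=(A,B,C)$ by feeding the output back through the identity, and then to recover its generator from the transfer function. Since $I$ is an admissible feedback operator, $I-\mathbb{F}$ is invertible in $\calL(L^p([0,\tau],U))$ for some $\tau>0$; as $\mathbb{F}$ is causal and shift--invariant, a routine gluing argument promotes this to a causal, shift--invariant bounded inverse $(I-\mathbb{F})^{-1}$ on $L^p_{loc}([0,\infty),U)$. Writing $\Psi$ for the extended output map of $\Sigma$, so that $(\Psi x)(t)=C_\Lambda T(t)x$ a.e.\ and $\Psi\in\calL\bigl(X,L^p_{loc}([0,\infty),U)\bigr)$ (admissibility of $C$ plus regularity), I would set $v_x:=(I-\mathbb{F})^{-1}\Psi x$ and \emph{define}
\begin{align*}
T^{cl}(t)x:=T(t)x+\Phi_t v_x,\qquad x\in X,\ t\ge0.
\end{align*}
Admissibility of $B$ (Definition~\ref{well-posed-defi}(i)) gives $\Phi_t v_x\in X$ and $T^{cl}(t)\in\calL(X)$, and strong continuity at $t=0$ follows from that of $(T(t))_{t\ge0}$ together with $\|\Phi_t\|\to0$ as $t\to0^+$.

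Next I would establish the semigroup law. With $\mathbb{S}_s$ the left shift by $s$, the control map satisfies $\Phi_{t+s}v=T_{-1}(t)\Phi_s v+\Phi_t(\mathbb{S}_s v)$, and $\Psi,\mathbb{F}$ satisfy the companion shift--composition identities; combining these with the feedback relation $v_x=\Psi x+\mathbb{F}v_x$ yields the consistency identity $\mathbb{S}_s v_x=v_{T^{cl}(s)x}$, whence
\begin{align*}
T^{cl}(t+s)x=T(t)\bigl(T(s)x+\Phi_s v_x\bigr)+\Phi_t(\mathbb{S}_s v_x)=T^{cl}(t)T^{cl}(s)x.
\end{align*}
Thus $(T^{cl}(t))_{t\ge0}$ is a $C_0$--semigroup; call its generator $\calG$.

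To identify $\calG$ with $A^{cl}$ I would compare resolvents. For $\operatorname{Re}\lambda$ large, Laplace transforming the definition of $T^{cl}$ and using $\widehat{\Phi_{\cdot}u}(\lambda)=D_\lambda\hat u(\lambda)$, $\widehat{\Psi x}(\lambda)=C_\Lambda R(\lambda,A)x$ and $\widehat{\mathbb{F}u}(\lambda)=H(\lambda)\hat u(\lambda)$ with $H(\lambda)=MD_\lambda$ gives $\hat v_x(\lambda)=(I-H(\lambda))^{-1}C_\Lambda R(\lambda,A)x$ (the inverse exists in $\calL(U)$ since $I$ is an admissible feedback), hence
\begin{align*}
R(\lambda,\calG)x=R(\lambda,A)x+D_\lambda\bigl(I-H(\lambda)\bigr)^{-1}C_\Lambda R(\lambda,A)x.
\end{align*}
On the other hand, solving $(\lambda-A_{-1}-BC_\Lambda)x=y$ for $x\in D(A^{cl})$ forces $x=R(\lambda,A)y+D_\lambda C_\Lambda x$; applying $C_\Lambda$, using $C_\Lambda D_\lambda=MD_\lambda=H(\lambda)$ (Theorem~\ref{characterization regular}) and inverting $I-H(\lambda)$ reproduces the very same formula, so $\lambda\in\rho(A^{cl})$ and $R(\lambda,A^{cl})=R(\lambda,\calG)$, whence $\calG=A^{cl}$. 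For the variation--of--parameters identity, applying $C_\Lambda$ to $T^{cl}(s)x=T(s)x+\Phi_s v_x$ and using $C_\Lambda T(s)x=(\Psi x)(s)$ and $C_\Lambda\Phi_s v_x=(\mathbb{F}v_x)(s)$ a.e.\ shows $T^{cl}(s)x\in D(C_\Lambda)$ with $C_\Lambda T^{cl}(s)x=(\Psi x+\mathbb{F}v_x)(s)=v_x(s)$ a.e.; substituting into $\Phi_t v_x=\int_0^t T_{-1}(t-s)Bv_x(s)\,ds$ gives \eqref{semigroup Staffans Weiss}.

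The hard part will be the rigorous bookkeeping around the Yosida extension $C_\Lambda$ — in particular the a.e.\ identities $C_\Lambda T(\cdot)x=\Psi x$ and $C_\Lambda\Phi_{\cdot}v=\mathbb{F}v$, which are exactly what bridges the explicit trajectory $T(t)x+\Phi_t v_x$ and the abstract operator $A_{-1}+BC_\Lambda$ — together with the causality argument upgrading the admissible feedback inverse from $L^p([0,\tau],U)$ to $L^p_{loc}$. Once these ingredients are secured, the well-posedness of the trajectory, the shift identities, and the Laplace computation are routine.
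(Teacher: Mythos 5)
The paper does not actually prove this theorem: it is quoted as the Staffans--Weiss perturbation theorem and attributed to Weiss (Hilbert case) and Staffans (Banach case), so there is no in-paper argument to compare against. Your outline is precisely the construction those references use: build the closed-loop trajectory $T^{cl}(t)x=T(t)x+\Phi_t(I-\mathbb{F})^{-1}\Psi x$, get the semigroup law from the shift/composition identities of the four maps of a well-posed system, identify the generator by matching the Laplace-transformed trajectory with the resolvent of $A_{-1}+BC_\Lambda$, and read off the variation-of-parameters formula from the regularity representation $\mathbb{F}u=C_\Lambda\Phi_{\cdot}u$ a.e. The two points you flag as ``the hard part'' are indeed where all the substance lives, and one more deserves explicit care: in the Banach $L^p$ setting the invertibility of $I-H(\lambda)$ on a right half-plane does not come for free from the phrase ``$I$ is an admissible feedback'' --- it has to be extracted from the invertibility of $I-\mathbb{F}$ on $L^p([0,\tau],U)$ via causality, shift-invariance and a Laplace-transform estimate (the paper's own remark that transfer functions suffice only in the Hilbert setting is a hint of this). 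Likewise, your resolvent identification only argues injectivity of $\lambda-A^{cl}$ and that the candidate formula is forced; you still need the converse direction (that $x:=R(\lambda,A)y+D_\lambda(I-H(\lambda))^{-1}CR(\lambda,A)y$ lies in $D(A^{cl})$ and solves the equation), which again uses $\operatorname{Range}(D_\lambda)\subset D(C_\Lambda)$. With those ingredients supplied, the proposal is a correct reconstruction of the cited proof rather than a new route.
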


Let the assumption ${\bf(A1)}$ be verified, it is shown in \cite{Hadd Manzo Ghandi} that $Z\subset D(C_{\Lambda})$ and $C_{\Lambda}x=Mx$, for $x\in Z$. Moreover  the operator $\calA$ (see \eqref{generator}) coincides with the operator defined in \eqref{Staffans Weiss}. Hence, $(\calA,D(\calA))$ generates a $C_0$-semigroup denoted also by $(T^{cl}(t))_{t\geq 0}$ which is given by \eqref{semigroup Staffans Weiss}.

In addition, for $\la\in \rho(A)$ we have
\begin{equation}\label{Res 1}
\la\in \rho(\calA) \Longleftrightarrow 1\in \rho(MD_\la)\Longleftrightarrow 1\in\rho(D_{\la}M).
\end{equation}
In this case
\begin{equation}\label{Res 2}
R(\la,\calA)=(I-D_\la M)^{-1} R(\la,A).
\end{equation}

\section{Positivity and exponential stability for perturbed boundary value systems}
In this section, we assume that the condition ${\bf(A1)}$ holds. Now, in order to discuss the positivity  of the semigroup generated by the operator $\mathcal{A}$ defined by \eqref{generator}, we assume that the spaces $X$ and $U$ are Banach lattices (see e.g. \cite{Batkai Kramer Rhandi},\cite{Nagel},\cite{Schaefer} for the definition). Hereafter, the uniform growth bound $\omega_0(A)$, the growth bound $\omega_1(A)$, the spectral bound $s(A)$, and the radius spectral $r(A)$ of a generator $A$ on $X$ are defined as follows:
\begin{align*}
\omega_0(A)&:=\inf\left\{\omega\in \mathbb{R}: \exists M_\omega>0: \|T(t)\|\leq M_\omega e^{\omega t},\forall t\geq 0   \right\},\\
\omega_1(A)&:=\inf\left\{\omega\in \mathbb{R}:\forall x\in D(A), \exists M_{\omega,x}>0: \|T(t)x\|\leq M_{\omega,x} e^{\omega t},\forall t\geq 0   \right\},\\
s(A)&:=\sup\{Re\lambda:\lambda\in \sigma(A)\}  \text{ and } \\
r(A)&:=\sup\{|\lambda|:\lambda\in \sigma(A)\}.
\end{align*}
The semigroup $(T(t))_{t\geq 0}$ is  said to be :
\begin{itemize}
	\item[(i)]  uniformly exponentially stable if $\omega_0(A)<0$.
	\item[(ii)] exponentially stable if $\omega_1(A)<0$.	
\end{itemize}
Clearly, for semigroups, the exponential stability does not imply the uniform stability. The goal of this section is to study the (uniformly) exponentially stable semigroup by using the theory of positive semigroups. We first recall that a semigroup $(T(t))_{t\geq 0}$ is positive (i.e. $T(t)$ positive for all $t\geq 0$) if and only if $R(\lambda,A)\geq 0$ for all $\lambda>s(A)$ (see e.g. \cite{Arendt}). Moreover, for a positive semigroup $(T(t))_{t\geq 0}$ with generator $A$, we have $\lambda_0\in \rho(A)$ and $R(\lambda_0,A)\geq 0$ if and only if $\lambda_0>s(A)$. The intrinsic consequence of positive semigroup is that the exponential stability is characterized by $s(A)<0$ (i.e $\omega_1(A)=s(A)$). If we choose the Banach lattice $X$ to be the Lebesgue space $L^p(\Omega),$ then $\omega_0(A)=\omega_1(A)=s(A)$ (see \cite{Weis}). In this case, the uniform exponential stability is also characterized by  $s(A)<0$.
We shall now state the following assumption:
\begin{itemize}
	\item[($\mathbf{A2}$)] The operators $M$, $R(\la,A)$ and $D_{\lambda}$ are positive operators for all $\lambda>s(A)$.
	
\end{itemize}
We mention that the control operator $B$ defined in \eqref{opertaorB} is positive if and only if the Dirichlet operator $D_{\la}$ is positive for all $\la>s(A)$ (see Remark 2.2 of \cite{Batkai Jacob Voigt Wintermayr}).\\
The following lemma,  which constitutes a general version of Theorem 3.1 of \cite{Arendt}, Theorem 1.1 of \cite{Voigt} and Theorem 3.2 \cite{Batkai Jacob Voigt Wintermayr}, plays a vital role in supporting our results.
\begin{lemma}\label{general version}
	Let the assumptions ${\bf(A1)}$-${\bf(A2)}$ be satisfied. Then the following conditions are equivalent :
	\begin{itemize}
		\item[i)] $r(MD_\la)<1$.
		\item[ii)] $\lambda\in\rho(\calA)$ and $R(\lambda,\calA)\geq 0$.
	\end{itemize}
	In addition, if one of these conditions is satisfied then
	\begin{equation}\label{Res mono}
	R(\lambda,\calA)\geq R(\lambda,A)\geq 0.
	\end{equation}
\end{lemma}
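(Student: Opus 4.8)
The plan is to establish the two implications separately; the order estimate \eqref{Res mono} will fall out of the first argument and is in any case immediate under (ii). For (i)$\Rightarrow$(ii) I would just run a Neumann series. Since the nonzero parts of the spectra of $MD_\la$ and $D_\la M$ coincide, $r(MD_\la)<1$ gives $r(D_\la M)<1$, so $(I-D_\la M)^{-1}=\sum_{n\ge 0}(D_\la M)^n$ converges in operator norm; each term is a product of the positive operators $D_\la$ and $M$ (positivity from ${\bf(A2)}$), whence $(I-D_\la M)^{-1}\ge 0$. Then \eqref{Res 1} gives $\la\in\rho(\calA)$, and \eqref{Res 2} gives $R(\la,\calA)=\sum_{n\ge 0}(D_\la M)^n R(\la,A)=R(\la,A)+\sum_{n\ge 1}(D_\la M)^n R(\la,A)\ge R(\la,A)\ge 0$, the $n=0$ term being $R(\la,A)\ge 0$ and the rest positive; this is (ii) together with \eqref{Res mono}.

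The substantive direction is (ii)$\Rightarrow$(i). Assume $\la\in\rho(\calA)$, $R(\la,\calA)\ge 0$. By \eqref{Res 1}, $1\in\rho(MD_\la)$; since $MD_\la$ is a positive operator on the Banach lattice $U$ its spectral radius lies in its spectrum, so $r(MD_\la)\ne 1$ automatically, and it remains only to exclude $r(MD_\la)>1$. Here I would first note that $\mu\mapsto D_\mu$ is decreasing on $(s(A),\infty)$ — from the Dirichlet resolvent identity $D_{\mu_1}-D_{\mu_2}=(\mu_2-\mu_1)R(\mu_1,A)D_{\mu_2}$ and ${\bf(A2)}$ — so that $\mu\mapsto MD_\mu$ and $\mu\mapsto r(MD_\mu)$ are decreasing; combined with the decay of the transfer function $H(\mu)=MD_\mu$ as $\mu\to+\infty$ (\thmref{characterization regular}) this forces $r(MD_\mu)<1$ for all large $\mu$, hence, by the implication just proved, $R(\mu,\calA)\ge 0$ for all large $\mu$, so that $(T^{cl}(t))_{t\ge 0}$ is a positive semigroup. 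For a positive semigroup, $\mu\in\rho(\calA)$ with $R(\mu,\calA)\ge 0$ is equivalent to $\mu>s(\calA)$, so $R(\la,\calA)\ge 0$ forces $\la>s(\calA)$; and, by monotonicity of $\mu\mapsto r(MD_\mu)$ together with the same dichotomy and \eqref{Res 1}, the set $\{\mu>s(A):r(MD_\mu)<1\}$ equals $(s(\calA),\infty)$, whence $r(MD_\la)<1$, contradicting $r(MD_\la)>1$. A more hands-on route to the same end: iterating $R(\la,\calA)=R(\la,A)+D_\la M R(\la,\calA)$ (valid since $C_\Lambda|_Z=M$) yields $R(\la,\calA)=R(\la,A)+\sum_{k=1}^{n}D_\la(MD_\la)^{k-1}CR(\la,A)+D_\la(MD_\la)^{n}MR(\la,\calA)$, all of whose summands are positive under (ii), so the partial sums are dominated as positive operators by $R(\la,\calA)$; since $\|S\|\le\|T\|$ whenever $0\le S\le T$, this bounds $\|D_\la(MD_\la)^nCR(\la,A)\|$ uniformly in $n$, and using $GD_\la=I_U$ (with $G$ bounded on $\ker(\la-A_m)$ for the norm of $X$) one strips off $D_\la$ and concludes $r(MD_\la)\le 1$ by Gelfand's formula.

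I expect the main obstacle to be precisely the behaviour of $r(MD_\mu)$ as $\mu\to+\infty$ inside (ii)$\Rightarrow$(i): regularity of $\Sigma$ only gives $H(\mu)u\to 0$ for each $u\in U$ (strong convergence), so passing from this to $r(MD_\mu)<1$ for large $\mu$ — equivalently, to the positivity of the closed-loop semigroup — and then to the identification $\inf\{\mu:r(MD_\mu)<1\}=s(\calA)$ needs extra input (admissibility estimates on $B$, the monotone structure of $(MD_\mu)_\mu$, or specialization to $L^p$). Equivalently, the delicate point is to upgrade the order-domination of the Neumann partial sums by $R(\la,\calA)$ to a genuine operator-norm bound on the powers $(MD_\la)^n$, i.e.\ to obtain $(I-MD_\la)^{-1}\ge 0$ on all of $U$ from the positivity of the single resolvent $R(\la,\calA)$; once this is available, the elementary principle "$T\ge 0$ and $(I-T)^{-1}\ge 0$ imply $r(T)<1$" — because then $(I-T)^{-1}=I+T(I-T)^{-1}\ge I$, hence $\sum_{k=0}^{n}T^k\le(I-T)^{-1}$ and $\|T^n\|\le 2\|(I-T)^{-1}\|$, so $r(T)\le 1$, while $r(T)\ne 1$ since $1\in\rho(T)$ — closes the argument. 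Arendt's theory of resolvent positive operators \cite{Arendt} provides the natural framework for making the $+\infty$ analysis rigorous.
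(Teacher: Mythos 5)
Your implication i)$\Rightarrow$ii), together with \eqref{Res mono}, is correct and is essentially the paper's own argument (the paper expands $(I-MD_\la)^{-1}$ rather than $(I-D_\la M)^{-1}$ in a Neumann series, but the computation is the same). The gap is in ii)$\Rightarrow$i), and it is exactly the point you flag yourself: you never actually derive $(I-MD_\la)^{-1}\geq 0$ from $R(\la,\calA)\geq 0$, and neither of your two routes closes this. Route 1 needs $r(MD_\mu)<1$ for all large $\mu$, but regularity only gives $H(\mu)u=MD_\mu u\to 0$ \emph{strongly}, and a decreasing family of positive operators tending strongly to zero need not have spectral radii dropping below $1$; besides, assuming such a $\mu$ exists would smuggle in the hypothesis of Theorem \ref{Positivity}, which is what the lemma is meant to feed, not to use. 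Route 2 stalls when you ``strip off $D_\la$'': the inverse $G_{|\ker(\la-A_m)}=D_\la^{-1}$ is bounded for the graph norm of $Z$ but not, in general, for the norm of $X$ (the operator $D_\la:U\to X$ need not be bounded below), and you would additionally need $MR(\la,A):X\to U$ to be surjective to cancel the factor on the right; so the order bound $0\le D_\la(MD_\la)^nMR(\la,A)\le R(\la,\calA)$ in $\calL(X)$ does not transfer to a bound on $\|(MD_\la)^n\|_{\calL(U)}$.

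The paper supplies the missing bridge by a purely algebraic identity. From \eqref{Res 1}--\eqref{Res 2}, extended to the extrapolation space, one has $R(\la,\calA_{-1})B=(I-D_\la M)^{-1}D_\la$, and hence
\begin{align*}
(I-MD_\la)^{-1}=I+M(I-D_\la M)^{-1}D_\la=I+MR(\la,\calA_{-1})B,
\end{align*}
whose right-hand side is positive because $B\ge 0$ (equivalent to $D_\la\ge0$), $M\ge 0$, and $R(\la,\calA_{-1})\ge 0$, the last being inherited from $R(\la,\calA)\ge 0$ on $X_{-1}$ (the paper cites \cite{Batkai Jacob Voigt Wintermayr} for this). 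Once $(I-MD_\la)^{-1}\ge 0$ is in hand, your own ``elementary principle'' --- $\sum_{k=0}^{n}T^k\le(I-T)^{-1}$ forces $r(T)\le 1$, and $r(T)\in\sigma(T)$ for a positive $T$ excludes equality since $1\in\rho(T)$ --- finishes exactly as you describe. So your endgame is right; what is missing is this one identity.
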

\begin{proof}
	Let $\la>s(A)$ and assume that $r(M D_\la)<1$ then $1\in \rho(M D_{\la})$ and
	\begin{align*}
	(I-MD_\la)^{-1}=\sum_{n\geq 0}(MD_\la)^{n}.
	\end{align*}
	By using the positivity of $MD_\la$, we have 	$(I-MD_\la)^{-1}\geq 0$.\\
	Thanks to \eqref{Res 1} and \eqref{Res 2}, we obtain $\la\in\rho(\calA)$ and
	\begin{align*}
	R(\la,\calA)& =(I-D_\la M)^{-1}R(\la,A)\\
	&=R(\la,A)+D_\la (I-MD_{\la})^{-1}MR(\la,A) \\
	&\geq R(\la,A)\geq 0.
	\end{align*}
	Conversely, let $\la\in \rho(\calA)$ and $R(\la,\calA)\geq 0$. Thus $\la\in \rho(\calA_{-1})=\rho(\calA)$ and $R(\la,\calA_{-1})\geq 0$ (see Definition 2.1 and Remark 2.2 of \cite{Batkai Jacob Voigt Wintermayr}). By virtue of \eqref{Res 1} and \eqref{Res 2}, we obtain  $1\in\rho(MD_\la)$ and
	\begin{align*}
	(I-MD_\la)^{-1} &= I+M(I-D_\la M)^{-1} D_\la \\
	&=I+MR(\la,\calA_{-1})B \geq 0,
	\end{align*}
	which implies $r(MD_\la)<1$.
\end{proof}
We now establish the positivity of the semigroup $(T^{cl}(t))_{t\geq 0}$ generated by $\calA$.
\begin{theorem}\label{Positivity}
	Let the assumptions ${\bf(A1)}$-${\bf(A2)}$ be satisfied. If there exists $\la_0>s(A)$ such that $r(MD_{\la_0})<1$  then $\calA$ generates a positive semigroup $(T^{cl}(t))_{t\geq 0}$ on $X$.
\end{theorem}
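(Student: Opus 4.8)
The plan is to establish positivity of $(T^{cl}(t))_{t\geq 0}$ by combining the variation-of-parameters representation \eqref{semigroup Staffans Weiss} with the resolvent positivity obtained in \lemref{general version}. The cleanest route I would take is \emph{not} to iterate \eqref{semigroup Staffans Weiss} directly (which is delicate because $BC_\Lambda$ is an unbounded perturbation into $X_{-1,A}$), but rather to use the standard equivalence: a $C_0$-semigroup is positive if and only if its generator has eventually positive resolvent, i.e. $R(\lambda,\calA)\geq 0$ for all $\lambda$ large enough. Thus the whole theorem reduces to producing a real half-line $(\lambda_*,\infty)$ on which $\lambda\in\rho(\calA)$ and $R(\lambda,\calA)\geq 0$.

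First I would record that by hypothesis there is one point $\la_0>s(A)$ with $r(MD_{\la_0})<1$; by \lemref{general version} this already gives $\la_0\in\rho(\calA)$ and $R(\la_0,\calA)\geq 0$. The second step is to upgrade this single point to a half-line. Here I would invoke the monotonicity properties under assumption ${\bf(A2)}$: for $\mu\geq\lambda>s(A)$ one has $0\leq R(\mu,A)\leq R(\lambda,A)$ (resolvent of a positive semigroup is operator-monotone decreasing in the real parameter), and the Dirichlet operator $D_\lambda=(\lambda-A_{-1})^{-1}$ applied appropriately, together with $M\geq 0$, should yield $0\leq MD_\mu\leq MD_\lambda$ for $\mu\geq\lambda$, or at any rate control of the spectral radius. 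Since $r(\cdot)$ is monotone on positive operators ($0\leq S\leq T\Rightarrow r(S)\leq r(T)$ in a Banach lattice), this gives $r(MD_\mu)\leq r(MD_{\la_0})<1$ for all $\mu\geq\la_0$. Applying \lemref{general version} again at each such $\mu$ yields $\mu\in\rho(\calA)$ and $R(\mu,\calA)\geq 0$ for all $\mu\geq\la_0$.

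The final step is purely citational: a $C_0$-semigroup $(T^{cl}(t))_{t\geq0}$ with generator $\calA$ on a Banach lattice is positive if and only if $R(\mu,\calA)\geq0$ for all sufficiently large real $\mu$ (equivalently, for all $\mu>s(\calA)$, using that $\calA$ generates a semigroup so $s(\calA)<\infty$). Since we have exhibited such a half-line $[\la_0,\infty)$, positivity of $(T^{cl}(t))_{t\geq0}$ follows, and as a byproduct \eqref{Res mono} holds on that half-line.

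The main obstacle I anticipate is the monotonicity step $r(MD_\mu)\leq r(MD_{\la_0})$ for $\mu\geq\la_0$: one must verify that $\mu\mapsto MD_\mu$ is (entrywise, in the lattice sense) decreasing on $(s(A),\infty)$. This is where the structural identity $D_\mu = D_\lambda + (\lambda-\mu)R(\mu,A)D_\lambda$ (or the resolvent identity transplanted through the Dirichlet operator) is needed, together with ${\bf(A2)}$ to sign the correction term; if a clean monotonicity fails one falls back on the perturbation-series estimate $\|(MD_\mu)^n\|\leq\|(MD_{\la_0})^n\|$ obtained from positivity and domination. Everything else — the two applications of \lemref{general version} and the resolvent-positivity characterization of positive semigroups — is routine.
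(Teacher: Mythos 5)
Your proposal is correct and follows essentially the same route as the paper: one application of Lemma \ref{general version} at $\la_0$, then the resolvent-type identity $MD_{\la_0}-MD_\mu=(\mu-\la_0)MR(\mu,A)D_{\la_0}\geq 0$ (exactly the ``structural identity'' you anticipate, signed via ${\bf(A2)}$) to get $r(MD_\mu)\leq r(MD_{\la_0})<1$ for all $\mu>\la_0$, a second application of the lemma on that half-line, and the standard characterization of positive semigroups by eventually positive resolvents. The step you flag as the main obstacle is precisely the one-line computation the paper carries out, so there is no gap.
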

\begin{proof}
	Let $\la_0>s(A)$ such that $r(MD_{\la_0})<1$. Due to Lemma \ref{general version}, $R(\la_0,\calA)$ is positive. By using the resolvent equation we obtain
	\begin{align*}
	MD_{\la_0} - MD_\mu =(\mu - \la_0)M R(\la,A)D_{\la_0}\geq 0,
	\end{align*}	
	for $\mu>\la_0>s(A)$. This implies that the function $\la \longmapsto MD_\la $ is decreasing on $(s(A),+\infty)$. Therefore
	\begin{equation*}
	r(MD_\mu)\leq r(MD_{\la_0})<1,\quad \quad \mu>\la_0>s(A).
	\end{equation*}
	From Lemma \ref{general version}, we deduce
	\begin{align*}
	R(\mu,\calA) \geq 0, \quad \text{ for all } \mu>\la_0.
	\end{align*}
	It follows that $\calA$ generates a positive semigroup $(T^{cl}(t))_{t\geq 0}$ on $X$.
\end{proof}

It is worth noting that the authors of \cite{Hadd Rhandi} treat the positivity  of the example of neutral equations. They used similar conditions as in Theorem \ref{Positivity}.

In the following result, we introduce the importance of positivity in the analysis of stability.
\begin{proposition}\label{expo stab}
	Let the assumptions ${\bf(A1)}$-${\bf(A2)}$ be satisfied and $s(A)<0$. If $r(MD_0)<1$ then $\calA$ generates an exponentially stable semigroup $(T^{cl}(t))_{t\geq 0}$ in $X$.
\end{proposition}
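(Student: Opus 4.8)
The plan is to combine the positivity result of Theorem~\ref{Positivity} with the characteristic feature of positive semigroups on Banach lattices, namely that their growth bound $\omega_1(\calA)$ coincides with the spectral bound $s(\calA)$. So the strategy splits into two independent pieces: first, show that $(T^{cl}(t))_{t\ge 0}$ is a positive semigroup; second, show that $s(\calA)<0$, and then invoke $\omega_1(\calA)=s(\calA)$ to conclude exponential stability.

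For the first piece: we are given $s(A)<0$ and $r(MD_0)<1$. Since $s(A)<0$ we have $0>s(A)$, so $\la_0:=0$ is an admissible choice in Theorem~\ref{Positivity}; the hypothesis $r(MD_0)<1$ is exactly $r(MD_{\la_0})<1$ for that $\la_0$. Hence Theorem~\ref{Positivity} applies verbatim and tells us that $\calA$ generates a positive semigroup $(T^{cl}(t))_{t\ge 0}$ on $X$, and moreover (from the argument inside that proof, via Lemma~\ref{general version}) that $\mu\in\rho(\calA)$ with $R(\mu,\calA)\ge R(\mu,A)\ge 0$ for all $\mu>0$. This already hands us a large right half-line in the resolvent set of $\calA$.

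For the second piece: I want to push this from $\mu>0$ down to $\mu=0$, i.e.\ show $0\in\rho(\calA)$, which by the characterization of positive semigroups (a positive semigroup has $\la_0\in\rho(A)$ with $R(\la_0,A)\ge 0$ iff $\la_0>s(A)$, quoted in the text) is equivalent to $s(\calA)<0$. The cleanest route: apply Lemma~\ref{general version} directly at $\la=0$. We have $0>s(A)$, so $R(0,A)=-A^{-1}\ge0$ and $D_0\ge0$ exist by (H1)--(H2) and (A2), and $M\ge0$; the hypothesis $r(MD_0)<1$ is condition (i) of Lemma~\ref{general version}, whose conclusion (ii) is precisely $0\in\rho(\calA)$ and $R(0,\calA)\ge 0$. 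Since $(T^{cl}(t))$ is positive and $0\in\rho(\calA)$ with $R(0,\calA)\ge0$, the quoted equivalence forces $0>s(\calA)$, i.e.\ $s(\calA)<0$. Finally, because $(T^{cl}(t))$ is positive on a Banach lattice, $\omega_1(\calA)=s(\calA)<0$, which is exactly the definition of $\calA$ generating an exponentially stable semigroup.

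I do not expect a serious obstacle here; the statement is essentially a corollary of Theorem~\ref{Positivity} plus Lemma~\ref{general version} plus the standard fact $\omega_1=s$ for positive semigroups. The one point requiring a little care is making sure the hypotheses of Lemma~\ref{general version} and Theorem~\ref{Positivity} are legitimately invoked at the specific value $\la=\la_0=0$: this needs $0>s(A)$, which is given, together with (A2) guaranteeing positivity of $R(0,A)$ and $D_0$ at that point. One should also note that exponential stability in the sense defined ($\omega_1(\calA)<0$) is the right conclusion to claim — uniform exponential stability would additionally require $\omega_0(\calA)=\omega_1(\calA)$, which holds on $L^p(\Omega)$ but is not assumed for a general Banach lattice, so the proposition is correctly stated with the weaker notion.
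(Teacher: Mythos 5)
Your proposal is correct and follows essentially the same route as the paper's own proof: Lemma~\ref{general version} applied at $\la=0$ (legitimate since $s(A)<0$) gives $0\in\rho(\calA)$ with $R(0,\calA)\ge 0$, Theorem~\ref{Positivity} gives positivity of $(T^{cl}(t))_{t\ge 0}$, and the identity $\omega_1(\calA)=s(\calA)$ for positive semigroups yields the conclusion. Your version is in fact slightly more careful about the order of the steps (establishing positivity before invoking the characterization $\la_0\in\rho(\calA),\,R(\la_0,\calA)\ge 0\Leftrightarrow\la_0>s(\calA)$, which requires positivity), but the content is identical.
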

\begin{proof}
	Let $s(A)<0$. From Lemma \ref{general version}, one can see that if $r(MD_0)<1$ then $s(\calA)<0$. It follows from  Theorem \ref{Positivity} that $\mathcal{A}$ generates a positive semigroup in $X$ hence $s(\mathcal{A})=\omega_1(\mathcal{A})<0$ (i.e. $(T^{cl}(t))_{t\geq 0}$ is exponentially stable).
\end{proof}
The following result shows the necessary and sufficient conditions on the exponential stability  of semigroup $(T^{cl}(t))_{t\geq 0}$ generated by $\mathcal{A}$.
\begin{theorem}\label{nece suff exp stablility}
	Let the assumptions ${\bf(A1)}$-${\bf(A2)}$ be satisfied and $s(A)<0$. Assume that $\calA$ generates a positive semigroup $(T^{cl}(t))_{t\geq 0}$ on $X$. Then $\calA$ generates an exponentially stable semigroup $(T^{cl}(t))_{t\geq 0}$ if and only if $r(MD_0)<1$.
\end{theorem}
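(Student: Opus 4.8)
The plan is to establish the two implications separately. The direction ``$r(MD_0)<1 \Rightarrow$ exponential stability'' is already essentially contained in Proposition \ref{expo stab}: under assumptions ${\bf(A1)}$--${\bf(A2)}$ together with $s(A)<0$ and $r(MD_0)<1$, Lemma \ref{general version} gives $0\in\rho(\calA)$ with $R(0,\calA)\geq 0$, hence $s(\calA)<0$; since $(T^{cl}(t))_{t\geq 0}$ is assumed positive, $s(\calA)=\omega_1(\calA)<0$ and the semigroup is exponentially stable. So I would dispatch this direction in a sentence by citing Proposition \ref{expo stab}.

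The substance is the converse: assume $(T^{cl}(t))_{t\geq 0}$ is exponentially stable, i.e.\ $\omega_1(\calA)<0$. Because the semigroup is positive, we have $\omega_1(\calA)=s(\calA)$, so $s(\calA)<0$, and in particular $0>s(\calA)$. For a positive semigroup, $\mu>s(\calA)$ is equivalent to $\mu\in\rho(\calA)$ with $R(\mu,\calA)\geq 0$; applying this at $\mu=0$ yields $0\in\rho(\calA)$ and $R(0,\calA)\geq 0$. Now I want to invoke Lemma \ref{general version} with $\lambda=0$. The lemma's hypotheses ${\bf(A1)}$--${\bf(A2)}$ are in force, and condition (ii) of the lemma, namely $0\in\rho(\calA)$ and $R(0,\calA)\geq 0$, has just been verified; therefore condition (i) holds, i.e.\ $r(MD_0)<1$, which is exactly what we want. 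One small point to check is that assumption ${\bf(A2)}$ is stated for $\lambda>s(A)$, and here we are using it at $\lambda=0$; since $s(A)<0<$ anything, $0>s(A)$, so $M$, $R(0,A)$ and $D_0$ are all positive and the hypotheses of Lemma \ref{general version} are legitimately applicable at $\lambda=0$.

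The main obstacle — really the only nontrivial issue — is justifying the step ``positive semigroup and $s(\calA)<0$ imply $0\in\rho(\calA)$ with $R(0,\calA)\geq 0$'', i.e.\ that for a positive semigroup the resolvent set to the right of the spectral bound coincides with the region where the resolvent is positive. This is the standard characterization of positivity via the resolvent recalled in the text (and attributed to \cite{Arendt}): for a positive semigroup with generator $\calA$, one has $\mu_0\in\rho(\calA)$ and $R(\mu_0,\calA)\geq 0$ if and only if $\mu_0>s(\calA)$. Granting that, the argument is a short chain: exponential stability $\Rightarrow \omega_1(\calA)<0 \Rightarrow s(\calA)<0$ (using positivity) $\Rightarrow 0>s(\calA) \Rightarrow 0\in\rho(\calA),\ R(0,\calA)\geq 0 \Rightarrow r(MD_0)<1$ by Lemma \ref{general version}. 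I would also remark that the stated equivalence makes the two hypotheses ``$s(A)<0$'' and ``$\calA$ generates a positive semigroup'' the natural setting in which $r(MD_0)<1$ becomes both necessary and sufficient, tying the result back to Theorem \ref{Positivity} and Proposition \ref{expo stab}.
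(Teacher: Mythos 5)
Your proposal is correct and follows essentially the same route as the paper: the paper proves the theorem as a single chain of equivalences $s(\calA)<0\Leftrightarrow 0\in\rho(\calA),\,R(0,\calA)\geq 0\Leftrightarrow\cdots\Leftrightarrow r(MD_0)<1$ using \eqref{Res 1}, \eqref{Res 2} and the positive-semigroup characterizations, which is exactly the content of your two implications (with Lemma \ref{general version} packaging the middle steps). Your explicit check that Lemma \ref{general version} may be applied at $\lambda=0$ because $0>s(A)$ is a worthwhile detail the paper leaves implicit.
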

\begin{proof}
	Let $\mathcal{A}$ generate a positive semigroup $(T^{cl}(t))_{t\geq 0}$. By using \eqref{Res 1}, \eqref{Res 2} and the properties of positive semigroups, we obtain the following equivalence
	\begin{align*}
	s(\calA)<0&\Longleftrightarrow 0\in \rho(\calA) \text{ and } R(0,\calA)\geq 0\\
	& \Longleftrightarrow 1\in \rho(D_{0}M) \text{ and } (I-D_{0}M)^{-1}\geq 0\\
	& \Longleftrightarrow 1\in \rho(MD_{0}) \text{ and } (I-MD_{0})^{-1}\geq 0\\
	& \Longleftrightarrow r(MD_{0})<1.
	\end{align*}
	This ends the proof.	
\end{proof}
\begin{remark}\label{remark stab}
	It should be noted that if we choose the state space $X$ to be a $L^p$-space or AL-space or Hilbert space then the results of Proposition \ref{expo stab} and Theorem \ref{nece suff exp stablility} are true for the uniform case (i.e. uniform exponential stability).
\end{remark}
\begin{example}
	Consider the following partial differential equation governed by transport equations
	\begin{equation}\label{transport 1}
	\begin{cases}
	\displaystyle\frac{\partial u(t,s)}{\partial t}=\rho \displaystyle\frac{\partial u(t,s)}{\partial s},&(t,s)\in \mathbb{R}^+\times [0,1];\\
	u(t,1)=k u(t,0),&t\in \mathbb{R}^+;\\
	u(0,\cdot)=u_0\in L^p [0,1],&
	\end{cases}
	\end{equation}
	where $k$ is a positive  constant and $\rho \in (0,+\infty)$ is the propagation velocity. This model describes certain physical phenomena such as networks, heat exchangers, chemical reactors. We set
	\begin{equation*}
	A_m:=\rho \frac{d}{ds},\quad Z=W^{1,p}(0,1).
	\end{equation*}
	It is well-known that the operator $A:=A_m$	with domain $D(A)=\{f\in Z:f(1)=0\}$ generates a nilpotent translation semigroup (hence a positive and exponentially stable semigroup) on $L^{p}[0,1]$. Clearly, the Dirichlet operator is given by
	\begin{equation*}
	(D_\la u)(x)=e^{\frac{\la}{\rho} (x-1)}u,\quad u\in \mathbb{R}, x\in [0,1].
	\end{equation*}
	Due to Example 2.3.4 of \cite{Staffans} and Lemma 3 of \cite{Boulouz Bounit Hadd}, The triple $(A,-A_{-1} D_0,k \delta_0 )$  is a regular linear system with $I:\mathbb{R}\longrightarrow \mathbb{R}$ is an admissible feedback operator. Thus, the solution of transport equation \eqref{transport 1} is positive and it is uniformly exponentially stable if and only if $k\in [0,1)$.
\end{example}
\begin{example}
	Consider the following partial differential equation governed by heat equations with mixed boundaries
	\begin{equation}\label{heat}
	\begin{cases}
	\displaystyle\frac{\partial u(t,x)}{\partial t}=\sigma  \displaystyle\frac{\partial^2 u(t,x)}{\partial x^2},&(t,x)\in \mathbb{R}^+\times [0,\pi];\\
	\displaystyle\frac{d u}{dx}(t,0)+k u(t,0)=0,\quad u(t,\pi)=0,&t\in \mathbb{R}^+;\\
	u(0,\cdot)=u_0\in L^2 [0,\pi],&
	\end{cases}
	\end{equation}
	where $k$ is a positive constant and $\sigma \in (0,+\infty)$ presents the thermal diffusivity. We select
	\begin{equation*}
	A_m=\sigma\frac{d^2}{dx^2},\quad Z=\{f\in H^2 (0,\pi):f(\pi)=0 \}.
	\end{equation*}
	The Dirichlet operator $D_\la$ is given by
	\begin{equation*}
	(D_\la u)(x)=\begin{cases}
	\displaystyle\frac{\sinh(\sqrt{\frac{\la}{\sigma}}(\pi-x))}{\sqrt{\frac{\la}{\sigma}}\cosh(\sqrt{\frac{\la}{\sigma}}\pi)}u, & \la >0, x\in [0,\pi]; \\
	(\pi-x) u , & \la=0, x\in [0,\pi].
	\end{cases}
	\end{equation*}
	From Example 2.6.8 \cite{Tucsnak Weiss}, the operator $A:=A_m$ with domain $D(A)=\{f\in Z:f'(0)=0 \}$ generates a positive and exponentially stable semigroup on $L^{2}[0,\pi]$. Since the triple $(A,-A_{-1}D_0,k\delta_0)$  is a regular linear system with $I:\mathbb{R}\longrightarrow \mathbb{R}$ is an admissible feedback operator (see Example 1 of \cite{Boulouz Bounit Driouich Hadd} and Lemma 3 of \cite{Boulouz Bounit Hadd}) then the solution of equation \eqref{heat} is positive. In addition, it is  uniformly exponentially stable if and only if $k<\frac{1}{\pi}$.
\end{example}
\section{Positive hyperbolic systems}
In this section, we apply the abstract results of stability obtained in the previous section to a special case of transport hyperbolic equations. To this end, let first introduce the following spaces and operators
\begin{itemize}
	\item The state space
	\begin{align}\label{state space}
	\mathcal{X}:=(L^2[0,1])^n\cong L^2([0,1],\mathbb{C}^n);
	\end{align}
	\item The boundary space of the value in endpoints
	\begin{align}\label{boundary space}
	\mathcal{U}:=\mathbb{C}^n;
	\end{align}
	\item The differential operator $\mathcal{Q}_{r,m}:\mathcal{Z}\longrightarrow \mathcal{X}$ given by
	\begin{align*}
	\mathcal{Q}_{r,m}:=\begin{pmatrix}
	-d_1 \frac{\partial }{\partial x}& &0\\
	& \ddots  &  \\
	0&  & -d_n \frac{\partial }{\partial x}
	\end{pmatrix}
	\end{align*}
	with (maximal) domain
	\begin{align*}
	\mathcal{Z}:=(H^{1}[0,1])^n\cong H^{1}([0,1],\mathbb{C}^n);
	\end{align*}
	\item The trace operator $\mathcal{G}:\mathcal{Z}\longrightarrow \mathcal{U}$ given by
	\begin{align*}
	\mathcal{G}:=\mathbb{I}_n \otimes \delta_0;
	\end{align*}
	\item The perturbed boundary $\mathcal{M}:\mathcal{Z}\longrightarrow \mathcal{U}$ given by
	\begin{align*}
	\mathcal{M}:=K\otimes \delta_1.
	\end{align*}
\end{itemize}
With these spaces and operators, the hyperbolic system \eqref{positive hyperbolic} can be viewed as the following perturbed boundary value system
\begin{align}\label{Q-r-system}
\begin{cases}
\dot{z}(t)=\mathcal{Q}_{r,m} z(t),&t\geq 0,\\
\mathcal{G}z(t)=\mathcal{M}z(t),&t\geq 0,\\
z(0)=\binom{y_0}{\varphi}.	&
\end{cases}
\end{align}
Now, we define the linear operator
\begin{align}\label{Q-cl-operator}
Q^{cl}_{r}:=Q_{r,m},\quad D(Q^{cl}_{r}):=\{f\in \mathcal{Z}: \mathcal{G}f=\mathcal{M}f\},
\end{align}
As shown in the previous section, to study the well-posedness and stability of the solution of the system \eqref{Q-r-system}, it suffices to prove that $Q^{cl}_{r}$ is a generator of a positive semigroup on $\mathcal{X}$. To that purpose, we first define
\begin{align*}
\mathcal{Q}_r:=\mathcal{Q}_{r,m},\quad D(\mathcal{Q}_r)=\{y=(y_1,y_2,...,y_n): y_i\in H^1[0,1],y_i(0)=0,i=1,...,n\}.
\end{align*}
Thus $\mathcal{Q}_r$ generates a nilpotent (hence positive and uniformly exponentially stable) semigroups $(\mathcal{R}(t))_{t\geq 0}$ which is given by
\begin{align*}
\mathcal{R}(t)=diag(\mathcal{R}_i(t))_{i=1,2,...,n}
\end{align*}
where
\begin{align*}
(\mathcal{R}_{i}(t)f_i)(s)=\begin{cases}
f_i(s-d_i t),&s\geq d_i t;\\
0,& \text{otherwise}
\end{cases}
\end{align*}
with $f_i\in L^2[0,1]$, $i=1,2,...,n$. Observe that $(\mathcal{R}_i(t))_{t\geq 0}$ is a right translated semigroup generated by $\mathcal{Q}_r^i:=d_i\frac{\partial}{\partial x}$ with its domain $D(\mathcal{Q}_r^i):=\{f\in H^1[0,1]:f(0)=0\}$. In addition
\begin{align*}
\mathcal{Q}_r:=\begin{pmatrix}
\mathcal{Q}_r^1& &0\\
& \ddots& \\
0 & &\mathcal{Q}_r^n
\end{pmatrix},\quad D(\mathcal{Q}_r):=D(\mathcal{Q}_r^1)\times \cdots \times D(\mathcal{Q}_r^n).
\end{align*}
It is clear that $\mathcal{G}$ is surjective. This means that  the assumptions ${\bf(H1)}$ and ${\bf(H2)}$ are satisfied. Therefore, the Dirichlet operator associated to $\mathcal{Q}_{r,m}$ and $\mathcal{G}$ is
\begin{align*}
(E_{\lambda}u)(x)=\begin{pmatrix}
e^{-\frac{\lambda}{d_1}x}& &0\\
& \ddots& \\
0& &e^{-\frac{\lambda}{d_n}x}
\end{pmatrix} u, \quad x\in [0,1],u\in \mathcal{U}.
\end{align*}
Define $\mathcal{B}=(\lambda-\mathcal{Q}_{r,-1})E_{\lambda},\quad \lambda \in  \mathbb{C},$ and $\mathcal{C}:=\mathcal{M}\imath$ where $\imath:D(\mathcal{Q}_r)\longrightarrow \mathcal{Z}$.

The following result presents the well-posedness of \eqref{positive hyperbolic}.
\begin{theorem}\label{positivity of hyperbolic}
	The operator $Q_r^{cl}$ defined in \eqref{Q-cl-operator} generates a $C_0$-semigroup $(\mathcal{R}^{cl}(t))_{t\geq 0}$ on $\mathcal{X}$. In particular, the problem \eqref{Q-r-system} is well-posed.
\end{theorem}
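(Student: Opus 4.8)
The plan is to verify that the triple $\Sigma:=(\mathcal{Q}_r,\mathcal{B},\mathcal{C})$ satisfies assumption $\mathbf{(A1)}$ and then to invoke the Staffans--Weiss perturbation theorem of Section~2 together with the identification (recalled there from \cite{Hadd Manzo Ghandi}) of the operator $\mathcal{Q}_{r,-1}+\mathcal{B}\mathcal{C}_\Lambda$ with $Q_r^{cl}$. Conditions $\mathbf{(H1)}$ and $\mathbf{(H2)}$ have already been checked above: $\mathcal{Q}_r$ generates the nilpotent diagonal right-translation semigroup $(\mathcal{R}(t))_{t\geq 0}$, and $\mathcal{G}$ is onto with Dirichlet operator $E_\lambda$. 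It therefore remains to show: (a) $\mathcal{B}$ is an admissible control operator for $\mathcal{Q}_r$; (b) $\mathcal{C}$ is an admissible observation operator for $\mathcal{Q}_r$; (c) $\Sigma$ is regular; and (d) $I:\mathcal{U}\to\mathcal{U}$ is an admissible feedback operator for $\Sigma$.

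For (b), the diagonal form of $\mathcal{R}(t)$ gives, for $x=(x_1,\dots,x_n)\in D(\mathcal{Q}_r)$,
\[
\mathcal{C}\mathcal{R}(t)x=K\,\big((\mathcal{R}_1(t)x_1)(1),\dots,(\mathcal{R}_n(t)x_n)(1)\big)^{\top},
\]
with $(\mathcal{R}_i(t)x_i)(1)=x_i(1-d_it)$ for $0\leq t\leq 1/d_i$ and $=0$ otherwise; the substitution $s=1-d_it$ then yields $\int_0^1\|\mathcal{C}\mathcal{R}(t)x\|^2\,dt\leq\|K\|^2\sum_{i=1}^n d_i^{-1}\|x_i\|^2\leq\gamma^2\|x\|^2$, which is the admissibility estimate. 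For (a), I would observe that each scalar block is the boundary-controlled transport system $(\mathcal{Q}_r^i,(\lambda-\mathcal{Q}_{r,-1}^i)e^{-\lambda\cdot/d_i},\delta_1)$ treated in Example~2.3.4 of \cite{Staffans} and Lemma~3 of \cite{Boulouz Bounit Hadd}, and since $\mathcal{B}$ is the diagonal assembly of the corresponding control operators, its admissibility follows; equivalently one writes $\Phi_t u$ explicitly as the state at time $t$ of $\dot z=\mathcal{Q}_{r,m}z$, $\mathcal{G}z=u$, $z(0)=0$, namely $(\Phi_tu)_i(x)=u_i(t-x/d_i)$ for $x\leq d_it$ and $=0$ otherwise, and checks directly that $\Phi_t\in\mathcal{L}(L^2(\mathbb{R}^+,\mathcal{U}),\mathcal{X})$.

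For (c), by Theorem~\ref{characterization regular} it suffices that the transfer function $H(\lambda)=\mathcal{M}E_\lambda=K\,\mathrm{diag}(e^{-\lambda/d_1},\dots,e^{-\lambda/d_n})$ tend to $0$ as $\lambda\to+\infty$, which is immediate since $d_i>0$; hence $\Sigma$ is regular with zero feedthrough, $\mathcal{Z}\subset D(\mathcal{C}_\Lambda)$ and $\mathcal{C}_\Lambda=\mathcal{M}$ on $\mathcal{Z}$. For (d), using the explicit form of $\Phi_t$ from step (a) one computes the input--output operator as a pure block-delay, $(\mathbb{F}u)(t)=\mathcal{M}\Phi_tu=K\big(u_1(t-1/d_1),\dots,u_n(t-1/d_n)\big)^{\top}$, with the $i$-th component understood to vanish for $t<1/d_i$ (the signal injected at $x=0$ at time $t-1/d_i$ reaches $x=1$ exactly at time $t$). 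Consequently, for any $\tau$ with $0<\tau<(\max_i d_i)^{-1}$ we have $\mathbb{F}|_{L^2([0,\tau],\mathcal{U})}=0$, so $I-\mathbb{F}=I$ is boundedly invertible on $L^2([0,\tau],\mathcal{U})$ and $I$ is an admissible feedback operator; thus $\mathbf{(A1)}$ holds.

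Once $\mathbf{(A1)}$ is established, the Staffans--Weiss theorem shows that $\mathcal{Q}_{r,-1}+\mathcal{B}\mathcal{C}_\Lambda$ generates a $C_0$-semigroup on $\mathcal{X}$ obeying the variation-of-constants formula \eqref{semigroup Staffans Weiss}, and by the identification from \cite{Hadd Manzo Ghandi} this operator is precisely $Q_r^{cl}$; denoting its semigroup by $(\mathcal{R}^{cl}(t))_{t\geq 0}$ and using the equivalence between the boundary problem and the Cauchy problem (as in \eqref{absract boundary}--\eqref{perturbed Cauchy}) gives the well-posedness of \eqref{Q-r-system}. The only genuinely computational points --- and the ones I expect to need the most care --- are writing down $\Phi_t$ explicitly and recognizing $\mathbb{F}$ as a pure block-delay operator; once those are in hand, the admissible-feedback condition is immediate and the remainder is a direct application of the abstract machinery of Section~2.
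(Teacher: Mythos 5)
Your proposal is correct and follows essentially the same route as the paper's proof: both verify $\mathbf{(A1)}$ for the triple $(\mathcal{Q}_r,\mathcal{B},\mathcal{C})$ by exhibiting the control maps $\Phi_t^i$ explicitly, proving admissibility of $\mathcal{C}$ via the same change of variables $s=1-d_it$, deducing regularity from $H(\lambda)=K\,\mathrm{diag}(e^{-\lambda/d_1},\dots,e^{-\lambda/d_n})\to 0$, and then invoking the Staffans--Weiss generation theorem together with the identification of $\mathcal{Q}_{r,-1}+\mathcal{B}\mathcal{C}_\Lambda$ with $Q_r^{cl}$. The one step you treat differently is the admissible feedback condition: the paper works in the frequency domain, using $\sup_{\operatorname{Re}\lambda>\delta}\|H(\lambda)\|<\tfrac12$ to invert $I-H(\lambda)$ (which relies on the Hilbert-space transfer-function criterion mentioned in Section~2), whereas you compute the input--output operator in the time domain as a pure block delay and note that it vanishes identically on $L^2([0,\tau],\mathcal{U})$ for $\tau<(\max_i d_i)^{-1}$, so that $I-\mathbb{F}=I$ there. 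Your argument is more elementary, works directly with the $L^p$ definition of admissible feedback, and has the side benefit of delivering condition (iii) of Definition~\ref{well-posed-defi} (boundedness of $\mathbb{F}$) for free, which the paper checks by a separate integral estimate.
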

\begin{proof}
	According to our approach developed in the previous section, it suffices to show that $(\mathcal{Q}_{r},\mathcal{B},\mathcal{C})$ is a regular linear system with the identity operator $I:\mathcal{U}\longrightarrow \mathcal{U}$ as an admissible feedback. To this end, we define
	\begin{align*}
	(\tilde{\Phi}_t u)(s):=\begin{pmatrix}
	(\Phi^1_t u_1)(s)& &0\\
	& \ddots & \\
	0&   & (\Phi^n_t u_n)(s)
	\end{pmatrix}
	\end{align*}
	for $L^2(\mathbb{R}^+,\mathcal{U})$ and $s\in [0,1]$, where
	\begin{align*}
	(\Phi^i_t u_i)(s):=\begin{cases}
	u_i(t-\frac{s}{d_i}),& t\geq\frac{s}{d_i}\\
	0,& \text{otherwise,}
	\end{cases}\quad i=1,2,...,n
	\end{align*}
	for $u=(u_1,u_2,...,u_n)\in L^2(\mathbb{R}^+,\mathcal{U})$.
	By taking Laplace transform, we obtain
	\begin{align*}
	\widehat{(\tilde{\Phi}_\cdot u)}(\lambda)=E_{\lambda}\widehat{u}(\lambda).
	\end{align*}
	Then  $\tilde{\Phi}_t$ is a control map associated to $\mathcal{Q}_r$ and $\mathcal{B}$. It is clear that $\tilde{\Phi}_t u\in \mathcal{X}$ which means that $\mathcal{B}$ is an admissible control operator for $\mathcal{Q}_r$. On the other hand, let $\alpha>0$ we have
	\begin{align*}
	\int_{0}^{\alpha}|\delta_1 f_i (\cdot-d_i t)|^2 dt &= \int_{0}^{\alpha}|f_i(1-d_i t)|^2 dt\\
	& \leq \frac{1}{d_i}\int_{1-d_i \alpha}^{1}|f_i(\sigma)|^2 d\sigma\\
	& \leq \frac{1}{d_i} \|f_i\|_{L^2[0,1]}^2
	\end{align*}
	for $f_i\in D(\mathcal{Q}^i_{r})$ and $i=1,2,...,n$. Hence $\mathcal{C}_i$ is an admissible observation operator for $\mathcal{Q}^i_r$. This implies that $\mathcal{C}$ is an admissible observation operator for $\mathcal{Q}_r$. Let us now show that $(\mathcal{Q}_r,\mathcal{B},\mathcal{C})$ is well-posed on $\mathcal{X},\mathcal{U},$ and $\mathcal{U}$. In fact, for  $\alpha_i>\frac{1}{d_i}$,  $i=1,2,...,n$, we have
	\begin{align*}
	\int_{0}^{\alpha_i}|\delta_1 \Phi^i_t u_i|^2dt&=\int_{\frac{1}{d_i}}^{\alpha_i}|u_i (t-\frac{1}{d_i})|^2 dt\\
	&=\int_{0}^{\alpha_i -\frac{1}{d_i}}|u_i(\tau)|^2 d\tau\\
	& \leq \|u_i\|^2_{L^2[0,\alpha_i]}.
	\end{align*}
	Let $\alpha:=\max\{\alpha_i,i=1,2,...,n\}$. By a simple computation we obtain
	\begin{align*}
	\int_{0}^{\alpha}|M\tilde{\Phi}_t u|^2 dt &\leq k'\|u\|^2_{L^2([0,\alpha],U)}.
	\end{align*}
	where $k'$ is a positive constant.

	The next step is to show that $(\mathcal{Q}_r,\mathcal{B},\mathcal{C})$ is a regular linear system on $\mathcal{X},\mathcal{U},$ and $\mathcal{U}$. It suffices to verify that the limit of a function transfer associated to system $(\mathcal{Q}_r,\mathcal{B},\mathcal{C})$ exists. In fact, let $u=(u_1,u_2,...,u_n)\in \mathcal{U}$ and $\lambda\in \mathbb{C}$ then
	\begin{align*}
	\lim_{Re\lambda \to +\infty}H(\lambda)u&=\lim_{Re \lambda \to +\infty} K.\begin{pmatrix}
	e^{-\frac{\lambda}{d_1}}& &0\\
	& \ddots& \\
	0& &e^{-\frac{\lambda}{d_n}}
	\end{pmatrix}u\\
	& =\lim_{Re \lambda \to +\infty} \begin{pmatrix}
	\sum_{i=1}^{i=n}K_{1,i}e^{-\frac{\lambda}{d_i}}u_i\\
	\vdots \\
	\sum_{i=1}^{i=n}K_{n,i}e^{-\frac{\lambda}{d_i}}u_i
	\end{pmatrix}\\
	&=0 \quad (\text{ in } \mathcal{U}).
	\end{align*}
	Moreover, we have
	\begin{align*}
	\lim_{Re \lambda \to +\infty}\|H(\lambda)\|_{\mathcal{L}(\mathcal{U})}=0.
	\end{align*}
	This implies that there exists $\delta>0$ such that
	
	\begin{align*}
	\sup_{Re \lambda>\delta}\|H(\lambda)\|_{\mathcal{L}(\mathcal{U})}<\frac{1}{2}.
	\end{align*}
	Thus, $I-H(\lambda)$ is invertible and $\|(I-H(\lambda))^{-1}\|\leq 2$ for all $Re \lambda\geq \delta$. Hence, $I:\mathcal{U}\longrightarrow \mathcal{U}$ is an admissible feedback. This ends the proof.
\end{proof}
\begin{theorem}
	The following assertions hold.
	\begin{itemize}
		\item [{\rm (i)}] The semigroup $(\mathcal{R}^{cl}(t))_{t\ge 0}$ is positive.
		\item [{\rm (ii)}] The semigroup $(\mathcal{R}^{cl}(t))_{t\ge 0}$ is uniformly exponentially stable if and only if $r(K)<1$.
	\end{itemize}
\end{theorem}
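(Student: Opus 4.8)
The plan is to recognise system \eqref{Q-r-system} as a special instance of the abstract framework of Section 3, so that both assertions follow by invoking Theorem \ref{Positivity}, Theorem \ref{nece suff exp stablility} and Remark \ref{remark stab}. First I would record that the hypotheses ${\bf(A1)}$--${\bf(A2)}$ hold with $A=\mathcal{Q}_r$, $B=\mathcal{B}$, $C=\mathcal{C}$, $M=\mathcal{M}$, Dirichlet operator $D_\lambda=E_\lambda$, and with $Q_r^{cl}$ of \eqref{Q-cl-operator} playing the role of $\calA$ in \eqref{generator}. Assumption ${\bf(A1)}$ is exactly the content of Theorem \ref{positivity of hyperbolic}. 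For ${\bf(A2)}$, note that $\mathcal{Q}_r$ generates the nilpotent (hence positive) semigroup $(\mathcal{R}(t))_{t\ge0}$, so $s(\mathcal{Q}_r)=-\infty$ and $R(\lambda,\mathcal{Q}_r)\ge 0$ for every real $\lambda$; the operator $E_\lambda=\mathrm{diag}\big(e^{-\lambda x/d_i}\big)$ has strictly positive entries, hence $E_\lambda\ge 0$ for all real $\lambda$; and $\mathcal{M}=K\otimes\delta_1\ge 0$ since $K\in\mathbb{R}_+^{n\times n}$. Thus ${\bf(A2)}$ holds on $(s(\mathcal{Q}_r),+\infty)=\mathbb{R}$.

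For (i), I would compute the ``symbol'' $\mathcal{M}E_\lambda=K\,\mathrm{diag}\big(e^{-\lambda/d_i}\big)\in\mathcal{L}(\mathcal{U})$ --- this is precisely the transfer function $H(\lambda)$ appearing in the proof of Theorem \ref{positivity of hyperbolic}. Since the diagonal factor tends to $0$ as $\lambda\to+\infty$, we have $r(\mathcal{M}E_\lambda)\to 0$, so there is $\lambda_0>s(\mathcal{Q}_r)$ with $r(\mathcal{M}E_{\lambda_0})<1$. Theorem \ref{Positivity} then yields that $Q_r^{cl}$ generates a positive semigroup $(\mathcal{R}^{cl}(t))_{t\ge0}$ on $\mathcal{X}$, which is (i).

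For (ii), evaluating at $\lambda=0$ gives $(E_0u)(x)=u$ for all $x\in[0,1]$, so $\mathcal{M}E_0=(K\otimes\delta_1)E_0=K$ and therefore $r(\mathcal{M}E_0)=r(K)$. Because $s(\mathcal{Q}_r)=-\infty<0$ and, by (i), $(\mathcal{R}^{cl}(t))_{t\ge0}$ is positive, Theorem \ref{nece suff exp stablility} applies and gives that $(\mathcal{R}^{cl}(t))_{t\ge0}$ is exponentially stable if and only if $r(\mathcal{M}E_0)=r(K)<1$. Finally, the state space $\mathcal{X}=(L^2[0,1])^n$ is an $L^2$-space (equivalently a Hilbert space), so $\omega_0(Q_r^{cl})=\omega_1(Q_r^{cl})=s(Q_r^{cl})$; by Remark \ref{remark stab} exponential stability and uniform exponential stability coincide in this setting, which proves (ii).

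I do not expect any serious obstacle here, since all the analytic work is already packaged in the abstract results of Section 3 and in Theorem \ref{positivity of hyperbolic}. The only points requiring a little attention are the verification of ${\bf(A2)}$ --- immediate from the explicit diagonal exponential form of $E_\lambda$ together with the nonnegativity of $K$ --- and the identification $r(\mathcal{M}E_0)=r(K)$, where one uses that $E_0$ maps $\mathcal{U}$ onto constant functions so that $\delta_1\circ E_0=\mathrm{id}_{\mathcal{U}}$.
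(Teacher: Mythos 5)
Your proposal is correct and follows essentially the same route as the paper: verify that the abstract framework of Section 3 applies with $A=\mathcal{Q}_r$, $D_\lambda=E_\lambda$, $M=\mathcal{M}$, get positivity from Theorem \ref{Positivity} via $r(\mathcal{M}E_{\lambda_0})<1$ for large $\lambda_0$, and get the stability criterion from Theorem \ref{nece suff exp stablility} together with $H(0)=\mathcal{M}E_0=K$ and Remark \ref{remark stab} for the uniform version on $L^2$. If anything, your explicit verification of ${\bf(A2)}$ (positivity of $R(\lambda,\mathcal{Q}_r)$, $E_\lambda$ and $\mathcal{M}$) is more careful than the paper's, which states that ${\bf(A2)}$ is satisfied without spelling out these checks.
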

\begin{proof}
	Since  $\displaystyle\lim_{\overset{\lambda \to \infty}{\lambda\in \mathbb{R}}}\|H(\lambda)\|=0$ then there exists $\lambda_0>0$ sufficiently large such that
	\begin{align*}
	r(H(\lambda_0)) <1.
	\end{align*}
	The assumption ${\bf(A2)}$ is then satisfied. Thus, by Theorem \ref{Positivity}, the semigroup $(\mathcal{R}^{cl}(t))_{t\geq 0}$ is positive. On the other hand, the transfer function at point $\lambda=0$ is
	\begin{align*}
	H(0):=K.
	\end{align*}
	Now as $s(\mathcal{Q}_r)<0$, by Theorem \ref{nece suff exp stablility}, the semigroup $(\mathcal{R}^{cl}(t))_{t\geq 0}$ is uniformly exponentially stable if and only if $r(K)<1$.
\end{proof}

\section{Hyperbolic systems with delay boundary conditions}

Consider the following hyperbolic systems with delay at endpoint $0$:
\begin{align}\label{delay heperbolic}
\begin{cases}
\vspace{0.1cm}
\displaystyle\frac{\partial y(x,t)}{\partial t}+D\displaystyle\frac{\partial y(x,t)}{\partial x}=0,& 0<x<1,t\geq 0,\\
y(0,t)=\displaystyle\int_{0}^{1}\int_{-1}^{0}d\mu(\theta)y_t(x,\theta)dx,&t\geq 0,\\
y(x,\theta)=\varphi(\theta),&0<x<1, -1\leq \theta\leq 0,\\
y(x,0)=y_0(x),&0<x<1.
\end{cases}
\end{align}
where $y(t,x)=(y_1(t,x),y_2(t,x),...,y_n(t,x))$, $D=diag\{d_1,d_2,...,d_n\}$ with the velocities $d_i>0$, for $i\in \{1,2,...,n \}$, $\mu:[-1,0]\to \mathcal{L}(\mathcal{U})_{+}$ is a positive operator-valued function of bounded variation with $\mu(0)=0$ and $y_t:[0,1]\times[-1,0]\to \mathcal{U}$ is a history function which is defined by $y_t(x,\theta):=y(x,t+\theta)$. Let $|\mu|$ be the positive Borel measure on $[-1,0]$ of the total variation of $\mu$.

It is well-known (e.g. in \cite{Hadd Idrissi Rhandi}), that the function $v(x,t,\theta):=y_t(x,\theta)=y(x,t+\theta)$, $(x,t,\theta)\in [0,1]\times [0,\infty)\times [-1,0]$, satisfies
\begin{align}\label{trans equ}
\begin{cases}
\displaystyle\frac{\partial v(x,t,\theta)}{\partial \theta}=\displaystyle\frac{\partial v(x,t,\theta)}{\partial \theta},&(x,t,\theta)\in [0,1]\times [0,+\infty)\times [-1,0],\\
v(x,t,0)=y(x,t),&(x,t)\in [0,1]\times [0,+\infty),\\
v(x,0,\cdot)=\varphi(x),&x\in[0,1].
\end{cases}
\end{align}
By using \eqref{trans equ}, we introduce the new state
\begin{align*}
z(x,t)=\binom{y(x,t)}{y_t(x,\cdot)},\quad 0\leq x\leq 1,t\geq 0,
\end{align*}
hence the problem \eqref{delay heperbolic} can be rewritten as
\begin{align*}
\begin{cases}
\displaystyle\frac{\partial z(x,t)}{\partial t}=\begin{pmatrix}
-D\displaystyle\frac{\partial }{\partial x}&0\\
0&\displaystyle\frac{\partial }{\partial \theta}
\end{pmatrix}z(x,t),&0<x<1,t\geq 0,\\
\begin{pmatrix}
y(x,0)\\
y_t(x,0)
\end{pmatrix}=\begin{pmatrix}
\displaystyle\int_{0}^{1}\int_{-1}^{0}d\mu(\theta)y_t(x,\theta)dx\\
y(x,t)
\end{pmatrix},&0<x<1,t\geq 0,\\
z(x,0)=\begin{pmatrix}
y_0(x)\\
\varphi(x,\cdot)
\end{pmatrix},&0<x<1.
\end{cases}
\end{align*}
In the sequel, we introduce some spaces and operators. We define the space
\begin{align*}
\mathbb{x}:=L^{2}([-1,0],\mathcal{X})\cong L^{2}([-1,0]\times [0,1],\mathbb{C}^n)
\end{align*}
and the maximal operator
\begin{align*}
\mathcal{Q}_{l,m}:=\begin{pmatrix}
\frac{d}{d\theta}& &0\\ &\ddots& \\
0& &\frac{d}{d\theta}
\end{pmatrix}, \quad
D(\mathcal{Q}_{l,m})=W^{1,2}([-1,0],\mathcal{X})\cong W^{1,2}([-1,0]\times  [0,1],\mathbb{C}^n):=\mathbb{Z}.
\end{align*}
On the other hand, we define
\begin{align*}
N\varphi=\int_{0}^{1}\int_{-1}^{0}d\mu(\theta)\varphi(x,\theta)dx,\quad \varphi\in \mathbb{Z}.
\end{align*}
In addition, we introduce the product spaces
\begin{align*}
\mathfrak{X}=\mathcal{X}\times \mathbb{X},\quad \mathfrak{Z}=\mathcal{Z}\times \mathbb{Z},\quad \mathfrak{U}=\mathcal{U}\times \mathcal{X},
\end{align*}
the differential operator
\begin{align*}
\mathfrak{A}_m=\begin{pmatrix}
\mathcal{Q}_{r,m}&0\\0&\mathcal{Q}_{l,m}
\end{pmatrix}:\mathfrak{Z}\longrightarrow \mathfrak{X},
\end{align*}
and the boundary operators
\begin{align*}
\mathfrak{G}=\begin{pmatrix}
\mathcal{G}&0\\0&\mathcal{G}_0
\end{pmatrix}:\mathfrak{Z}\longrightarrow \mathfrak{U},\quad \mathfrak{M}=\begin{pmatrix}
0&N\\I&0
\end{pmatrix}:\mathfrak{Z}\longrightarrow \mathfrak{U},
\end{align*}
where $\mathcal{G}_0:=\mathbb{I}_n \otimes \delta_0:\mathbb{Z}\longrightarrow \mathcal{X}$. Hence the problem \eqref{delay heperbolic} becomes
\begin{align*}
\begin{cases}
\dot{z}(t)=\mathfrak{A}_m z(t),&t\geq 0,\\
\mathfrak{G}z(t)=\mathfrak{M}z(t),&t\geq 0,\\
z(0)=\binom{y_0}{\varphi}.	&
\end{cases}
\end{align*}
We recall that the opeartor
\begin{align*}
\mathcal{Q}_l:=\mathcal{Q}_{l,m},\quad D(\mathcal{Q}_l):=\ker \mathcal{G}_0
\end{align*}
generates a $C_{0}$-semigroup $(\mathcal{S}(t))_{t\geq 0}$ given by
\begin{align*}
\mathcal{S}(t)=\begin{pmatrix}
S^{1}(t)& &0\\ &\ddots& \\
0& &S^{n}(t)
\end{pmatrix}
\end{align*}
where
\begin{align*}
(S^{i}(t)g_i)(\theta)=\begin{cases}
g_i(t+\theta), &\text{ if } t+\theta\leq 0,\\
0,&\text{ otherwise,}
\end{cases}   \quad g_i\in L^{2}([-1,0]),\quad i=1,2,...,n
\end{align*}
is a left semigroup generated by
\begin{align*}
\mathcal{Q}^i_l=\frac{d}{d\theta},\quad D(\mathcal{Q}^i_l)=\{h\in W^{1,2}([-1,0]):h(0)=0\}.
\end{align*}
We now define the operator
\begin{align*}
\mathfrak{A}=\mathfrak{A}_m,\quad D(\mathfrak{A}):=\ker\mathfrak{G}.
\end{align*}
Then
\begin{align*}
\mathfrak{A}=\begin{pmatrix}
Q_r&0\\0&Q_l
\end{pmatrix}, \quad D(\mathfrak{A})=D(Q_r)\times D(Q_l)
\end{align*}
generates a $C_0$-semigroup $(\mathfrak{T}(t))_{t\geq 0}$ on $\mathfrak{X}$ given by
\begin{align*}
\mathfrak{T}(t)=\begin{pmatrix}
\mathcal{R}(t)&0\\0&\mathcal{S}(t)
\end{pmatrix},\quad t\geq 0.
\end{align*}
Hence the assumptions ${\bf(H1)}$ and ${\bf(H2)}$ are satisfied. According to \cite{Greiner}, the Dirichlet operator associated to $\mathfrak{A}_m$ and $\mathfrak{G}$ is
\begin{align}\label{Dirichlet bb}
\mathfrak{D}_{\lambda}&=(\mathfrak{G}_{|\ker(\lambda-\mathfrak{A}_m)})^{-1}\nonumber\\
&=\begin{pmatrix}
E_\lambda&0\\
0&e_{\lambda}
\end{pmatrix},
\end{align}
where $e_{\lambda}:\mathcal{X}\longrightarrow \mathbb{X}$ is defined by $(e_{\lambda}\phi)(\theta,x)=e^{\lambda \theta}\phi(x)$ for all $\phi \in \mathcal{X},\theta\in[-1,0]$ and $[0,1]$.
We set
\begin{align*}
\mathfrak{B}=(\lambda-\mathfrak{A}_{-1})\mathfrak{D}_{\lambda},\quad \text{and} \quad \mathfrak{C}=\mathfrak{M}_{|D(\mathfrak{A})}.
\end{align*}
Then
\begin{align*}
\mathfrak{B}=\begin{pmatrix}
\mathcal{B}&0\\
0&\beta
\end{pmatrix}, \quad \text{and} \quad \mathfrak{C}=\begin{pmatrix}
0&N_0\\
I&0
\end{pmatrix},
\end{align*}
where $N_0$ is the restriction of $N$ to $D(Q_l)$ and $\beta=(-\mathcal{Q}_{l,-1})e_0$.

From \cite{Hadd Idrissi Rhandi}, we deduce that the triple $(\mathcal{Q}_l,\beta,N_0)$ is a regular linear system on $\mathbb{X},\mathcal{X}$ and $\mathcal{X}$ with $I:\mathcal{X}\longrightarrow \mathcal{X}$ is an admissible feedback operator.

The following result shows that the control system $(\mathfrak{A},\mathfrak{B})$ is well-posed.
\begin{lemma}
	$\mathfrak{B}$ is an admissible control operator for $\mathfrak{A}$.
\end{lemma}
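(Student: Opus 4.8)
The plan is to exploit the block-diagonal structure of $\mathfrak{A}$ and $\mathfrak{B}$ and reduce the admissibility of $\mathfrak{B}$ for $\mathfrak{A}$ to that of its two diagonal blocks: $\mathcal{B}$ for $\mathcal{Q}_r$, which holds because $(\mathcal{Q}_r,\mathcal{B},\mathcal{C})$ is a well-posed linear system by Theorem~\ref{positivity of hyperbolic}, and $\beta$ for $\mathcal{Q}_l$, which holds because $(\mathcal{Q}_l,\beta,N_0)$ is a regular (hence well-posed) linear system by \cite{Hadd Idrissi Rhandi}.

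First I would record that the extrapolation spaces split. Since $\mathfrak{A}=\mathrm{diag}(\mathcal{Q}_r,\mathcal{Q}_l)$ and $(\mathfrak{T}(t))_{t\ge 0}=\mathrm{diag}(\mathcal{R}(t),\mathcal{S}(t))$, one has $\rho(\mathfrak{A})=\rho(\mathcal{Q}_r)\cap\rho(\mathcal{Q}_l)$ and $R(\lambda,\mathfrak{A})=\mathrm{diag}\bigl(R(\lambda,\mathcal{Q}_r),R(\lambda,\mathcal{Q}_l)\bigr)$ there, so the norm $\|x\|_{-1,\mathfrak{A}}=\|R(\lambda,\mathfrak{A})x\|$ is equivalent to the product of $\|\cdot\|_{-1,\mathcal{Q}_r}$ and $\|\cdot\|_{-1,\mathcal{Q}_l}$. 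Hence $\mathfrak{X}_{-1,\mathfrak{A}}=\mathcal{X}_{-1,\mathcal{Q}_r}\times\mathbb{X}_{-1,\mathcal{Q}_l}$, the extrapolated semigroup is $\mathfrak{T}_{-1}(t)=\mathrm{diag}\bigl(\mathcal{R}_{-1}(t),\mathcal{S}_{-1}(t)\bigr)$, and $\mathfrak{B}=\mathrm{diag}(\mathcal{B},\beta)\in\mathcal{L}(\mathfrak{U},\mathfrak{X}_{-1,\mathfrak{A}})$.

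Then, for $u=(u_1,u_2)\in L^2(\mathbb{R}^+,\mathfrak{U})=L^2(\mathbb{R}^+,\mathcal{U})\times L^2(\mathbb{R}^+,\mathcal{X})$, the control map associated with $(\mathfrak{A},\mathfrak{B})$ factors along the diagonal,
\begin{align*}
\mathfrak{\Phi}_t u=\int_{0}^{t}\mathfrak{T}_{-1}(t-s)\mathfrak{B}u(s)\,ds=\binom{\int_{0}^{t}\mathcal{R}_{-1}(t-s)\mathcal{B}u_1(s)\,ds}{\int_{0}^{t}\mathcal{S}_{-1}(t-s)\beta u_2(s)\,ds}=\binom{\Phi^{\mathcal{B}}_t u_1}{\Phi^{\beta}_t u_2},
\end{align*}
where $\Phi^{\mathcal{B}}_t$ and $\Phi^{\beta}_t$ denote the control maps of $(\mathcal{Q}_r,\mathcal{B})$ and $(\mathcal{Q}_l,\beta)$ respectively. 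By the admissibility of $\mathcal{B}$ for $\mathcal{Q}_r$ and of $\beta$ for $\mathcal{Q}_l$ — together with the observation following Definition~\ref{well-posed-defi} that point~(i) forces the control map to be bounded into the state space for every time — we get $\Phi^{\mathcal{B}}_t u_1\in\mathcal{X}$ and $\Phi^{\beta}_t u_2\in\mathbb{X}$ for all $t\ge 0$. Therefore $\mathfrak{\Phi}_t u\in\mathcal{X}\times\mathbb{X}=\mathfrak{X}$ for all $t\ge 0$ and every $u\in L^2(\mathbb{R}^+,\mathfrak{U})$, which is precisely point~(i) of Definition~\ref{well-posed-defi}; that is, $\mathfrak{B}$ is an admissible control operator for $\mathfrak{A}$.

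The only step demanding care is the identification of $\mathfrak{X}_{-1,\mathfrak{A}}$ with the product $\mathcal{X}_{-1,\mathcal{Q}_r}\times\mathbb{X}_{-1,\mathcal{Q}_l}$ and the matching diagonal decompositions of $\mathfrak{T}_{-1}$ and $\mathfrak{B}$; once this bookkeeping is in place the conclusion is immediate from the two component admissibilities, so I do not expect a genuine obstacle here.
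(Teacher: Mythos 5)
Your proof is correct and follows essentially the same route as the paper: both exploit the block-diagonal structure to factor the control map $\check{\Phi}_{t}$ into the two component control maps $\tilde{\Phi}_{t}$ (for $(\mathcal{Q}_r,\mathcal{B})$) and $\bar{\Phi}_{t}$ (for $(\mathcal{Q}_l,\beta)$), and then invoke the two known component admissibilities. Your additional care in identifying $\mathfrak{X}_{-1,\mathfrak{A}}$ with $\mathcal{X}_{-1,\mathcal{Q}_r}\times\mathbb{X}_{-1,\mathcal{Q}_l}$ is a detail the paper leaves implicit, but it does not change the argument.
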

\begin{proof}
	Let $t_0>0$ and $\binom{u}{v}\in L^{2}(\mathbb{R}_+,\mathfrak{U})$, we have
	\begin{align}\label{control map bb}
	\check{\Phi}_{t_0}\binom{u}{v}&=\int_{0}^{t_0}\mathfrak{T}_{-1}(t_0-s)\mathfrak{B}\binom{u(s)}{v(s)}ds\nonumber\\
	&=\int_{0}^{t_0}\begin{pmatrix}
	R_{-1}(t_0-s)&0\\0&S_{-1}(t_0-s)
	\end{pmatrix}\binom{\mathcal{B}u(s)}{\beta v(s)}ds\nonumber\\
	&=\begin{pmatrix}
	\tilde{\Phi}_{t_0}&0\\0&\bar{\Phi}_{t_0}
	\end{pmatrix}\binom{u}{v}
	\end{align}
	where  $\tilde{\Phi}_{t_0}$ is a control map associated to $\mathcal{Q}_r$ and $\mathcal{B}$ and $\bar{\Phi}_{t_0}$ is a control map associated to $\mathcal{Q}_l$ and $\beta$.
	
	Since $\tilde{\Phi}_{t_0}\in \mathcal{X}$ and $\bar{\Phi}_{t_0} \in \mathbb{X}$ then $\check{\Phi}_{t_0}\in \mathfrak{X}$ which means that 	$\mathfrak{B}$ is an admissible control operator for $\mathfrak{A}$.
\end{proof}
The following lemma present the well-posedness of the observation system $(\mathfrak{C},\mathfrak{A})$. We give the similar proof as shown in \cite[Example 5.2]{Hadd Manzo Ghandi}.
\begin{lemma}
	$\mathfrak{C}$ is an admissible observation operator for $\mathfrak{A}$.
\end{lemma}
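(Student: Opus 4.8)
The plan is to verify directly the admissibility estimate of Definition~\ref{well-posed-defi}(ii), making full use of the block-diagonal form of $(\mathfrak{T}(t))_{t\ge0}$ together with the triangular form of $\mathfrak{C}$. Concretely, for $w=\binom{x}{\varphi}\in D(\mathfrak{A})=D(\mathcal{Q}_r)\times D(\mathcal{Q}_l)$ one computes
\begin{align*}
\mathfrak{C}\mathfrak{T}(t)w=\begin{pmatrix}0&N_0\\ I&0\end{pmatrix}\binom{\mathcal{R}(t)x}{\mathcal{S}(t)\varphi}=\binom{N_0\mathcal{S}(t)\varphi}{\mathcal{R}(t)x},
\end{align*}
so that, with the natural product norm on $\mathfrak{U}=\mathcal{U}\times\mathcal{X}$,
\begin{align*}
\int_0^{\alpha}\bigl\|\mathfrak{C}\mathfrak{T}(t)w\bigr\|_{\mathfrak{U}}^2\,dt=\int_0^{\alpha}\bigl\|N_0\mathcal{S}(t)\varphi\bigr\|_{\mathcal{U}}^2\,dt+\int_0^{\alpha}\bigl\|\mathcal{R}(t)x\bigr\|_{\mathcal{X}}^2\,dt.
\end{align*}
The two terms can now be estimated independently.

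For the first term I would invoke the fact recalled above that $(\mathcal{Q}_l,\beta,N_0)$ is a regular linear system (from \cite{Hadd Idrissi Rhandi}); in particular $N_0$ is an admissible observation operator for $\mathcal{Q}_l$, so there are constants $\alpha_1>0$ and $\gamma_1>0$ with $\int_0^{\alpha_1}\|N_0\mathcal{S}(t)\varphi\|_{\mathcal{U}}^2\,dt\le\gamma_1^2\|\varphi\|_{\mathbb{X}}^2$ for all $\varphi\in D(\mathcal{Q}_l)$. For the second term, since $(\mathcal{R}(t))_{t\ge0}$ is the nilpotent right-translation semigroup it is contractive, hence $\int_0^{\alpha_1}\|\mathcal{R}(t)x\|_{\mathcal{X}}^2\,dt\le\alpha_1\|x\|_{\mathcal{X}}^2$ (equivalently, the bounded operator $I$ is trivially an admissible observation operator for $\mathcal{Q}_r$). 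Choosing $\alpha:=\alpha_1$ and $\gamma^2:=\max\{\gamma_1^2,\alpha_1\}$ then yields $\int_0^{\alpha}\|\mathfrak{C}\mathfrak{T}(t)w\|_{\mathfrak{U}}^2\,dt\le\gamma^2\|w\|_{\mathfrak{X}}^2$; since $D(\mathfrak{A})$ is dense in $\mathfrak{X}$ this is precisely the admissibility of $\mathfrak{C}$ for $\mathfrak{A}$, and (as noted after Definition~\ref{well-posed-defi}) the estimate then holds for every $\alpha>0$.

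There is essentially no deep obstacle here: the only points requiring care are the correct identification of the $(2,1)$-entry of $\mathfrak{C}$, namely the canonical injection $D(\mathcal{Q}_r)\hookrightarrow\mathcal{X}$, which is bounded and hence contributes only the semigroup bound, and the bookkeeping that reduces the product system to the two already-understood components $(\mathcal{Q}_r,\cdot,I)$ and $(\mathcal{Q}_l,\beta,N_0)$. This follows the same scheme as in \cite[Example 5.2]{Hadd Manzo Ghandi}.
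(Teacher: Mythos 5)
Your proposal is correct and uses the same block decomposition as the paper: split $\mathfrak{C}\mathfrak{T}(t)$ along the diagonal semigroup into the component $\mathcal{R}(t)$ observed through the (bounded, hence trivially admissible) inclusion and the component $\mathcal{S}(t)$ observed through $N_0$. The only substantive difference lies in how the term $\int_0^{\alpha}\|N_0\mathcal{S}(t)g\|_{\mathcal{U}}^2\,dt$ is controlled: you invoke the admissibility of $N_0$ for $\mathcal{Q}_l$ as a consequence of the regularity of $(\mathcal{Q}_l,\beta,N_0)$ asserted just before the lemma (deduced from \cite{Hadd Idrissi Rhandi}), whereas the paper re-derives this estimate from scratch via H\"older's inequality and Fubini's theorem, obtaining the explicit bound $(|\mu|([-1,0]))^2\|g\|^2_{\mathbb{X}}$. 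Your shortcut is logically legitimate within the paper's own framework, and it is shorter; the paper's explicit computation buys self-containedness (the operator $N$ here carries an extra integration in $x$ over $[0,1]$, so it is not literally the delay operator treated in \cite{Hadd Idrissi Rhandi}, and the direct estimate removes any doubt about that reduction) together with a concrete admissibility constant that reappears in the final bound $c=\max\{\sqrt{k(\alpha)},|\mu|([-1,0])\}$. Minor cosmetic point: the paper measures $\mathfrak{U}=\mathcal{U}\times\mathcal{X}$ with the sum norm and inserts a factor $2$ from $(a+b)^2\le 2(a^2+b^2)$, while you use the $\ell^2$ product norm; the two are equivalent and this changes nothing.
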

\begin{proof}
	Let $f\in D(Q_r),g\in D(Q_l)$ and $0<\alpha<1$, we have
	\begin{align*}
	\displaystyle\int_{0}^{\alpha}\left\|\mathfrak{C}\mathfrak{T}(t)\binom{f}{g}\right\|_{\mathfrak{U}}^{2}dt&=\displaystyle\int_{0}^{\alpha}(\|\mathcal{R}(t)f\|_{\mathcal{X}}+\|N_0 \mathcal{S}(t)g\|_{\mathcal{U}})^2 dt\\
	&\leq 2\int_{0}^{\alpha}(\|\mathcal{R}(t)f\|^2_{\mathcal{X}}+\|N_0 \mathcal{S}(t)g\|^2_{\mathcal{U}})dt\\
	&\leq k(\alpha)\|f\|^2_{\mathcal{X}}+2\int_{0}^{\alpha}\|N_0 \mathcal{S}(t)g\|^{2}_{\mathcal{U}}dt
	\end{align*}
	for a constant $k(\alpha)>0$. According to H\"{o}lder's inequality  and Fubini's Theorem, we obtain
	\begin{align*}
	\int_{0}^{\alpha}\|N_0 S(t)g\|^{2}_{\mathcal{U}}dt&=\int_{0}^{\alpha}\left\|\int_{0}^{1}\int_{-1}^{0}d\mu(\theta)(S(t)g)(x,\theta)dx\right\|^2_{\mathcal{U}}dt\\
	&=\int_{0}^{\alpha}\left\|\int_{0}^{1}\int_{-1}^{-t}d\mu(\theta)g(x,t+\theta)dx\right\|^2_{\mathcal{U}}dt\\
	&\leq \int_{0}^{\alpha}\left(\int_{0}^{1}\int_{-1}^{-t}\|g(x,t+\theta)\|_{\mathcal{U}}d|\mu|(\theta)dx\right)^2dt\\
	&\leq |\mu|([-1,0])\int_{0}^{\alpha}\int_{0}^{1}\int_{-1}^{-t}\|g(x,t+\theta)\|^2_{\mathcal{U}}d|\mu|(\theta)dxdt\\
	&=|\mu|([-1,0])\int_{0}^{\alpha}\int_{-1}^{-t}\int_{0}^{1}\|g(x,t+\theta)\|^2_{\mathcal{U}}d|dx\mu|(\theta)dt\\
	&=|\mu|([-1,0])\int_{0}^{\alpha}\int_{-1}^{-t}\|g(\cdot,t+\theta)\|^2_{\mathcal{X}}d|\mu|(\theta)dt\\
	&=|\mu|([-1,0])\int_{-1}^{0}\int_{0}^{-\theta}\|g(\cdot,t+\theta)\|^2_{\mathcal{X}}dtd|\mu|(\theta)\\
	&\leq (|\mu|([-1,0]))^2\|g\|^2_{\mathbb{X}}.
	\end{align*}
	Therefore
	\begin{align*}
	\displaystyle\int_{0}^{\alpha}\left\|\mathfrak{C}\mathfrak{T}(t)\binom{f}{g}\right\|_{\mathfrak{U}}^{2}dt\leq c^2\left\|\binom{f}{g}\right\|_{\mathfrak{X}}^2,
	\end{align*}
	where $c:=\max\{\sqrt{k(\alpha)},|\mu|([-1,0])\}$. The proof is claimed.
\end{proof}

The following result gives the well-posedness of the problem \eqref{delay heperbolic}.
\begin{proposition}\label{Prop 5.3}
	The operator $\mathfrak{A}^{cl}$ generates a $C_0$-semigroup $(\mathfrak{T}^{cl}(t))_{t\geq 0}$ on $\mathfrak{X}$.
\end{proposition}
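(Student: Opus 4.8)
The strategy is to verify assumption $\mathbf{(A1)}$ for the system $(\mathfrak{A},\mathfrak{B},\mathfrak{C})$, after which the general perturbation theorem from Section~2 immediately yields that $\mathfrak{A}^{cl}:=\mathfrak{A}_{-1}+\mathfrak{B}\mathfrak{C}_\Lambda$ generates a $C_0$-semigroup $(\mathfrak{T}^{cl}(t))_{t\ge 0}$ on $\mathfrak{X}$, and this operator coincides with the operator associated to the boundary system via \eqref{generator}. The hypotheses $\mathbf{(H1)}$ and $\mathbf{(H2)}$ have already been checked (the semigroup $(\mathfrak{T}(t))_{t\ge 0}$ is given explicitly and $\mathfrak{G}$ is surjective because each component trace operator is). The two preceding lemmas establish that $\mathfrak{B}$ is an admissible control operator for $\mathfrak{A}$ and $\mathfrak{C}$ is an admissible observation operator for $\mathfrak{A}$. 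So what remains is: (a) the well-posedness condition (iii) of Definition~\ref{well-posed-defi} — boundedness of the input--output operator $\mathbb{F}$; (b) regularity of the triple; and (c) admissibility of the identity feedback $I:\mathfrak{U}\longrightarrow\mathfrak{U}$.

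First I would exploit the block-triangular structure. Writing $\mathfrak{B}=\mathrm{diag}(\mathcal{B},\beta)$, $\mathfrak{T}=\mathrm{diag}(\mathcal{R},\mathcal{S})$ and $\mathfrak{C}=\left(\begin{smallmatrix}0&N_0\\ I&0\end{smallmatrix}\right)$, the associated input--output operator $\mathbb{F}$ has the form $\mathbb{F}\binom{u}{v}=\binom{N_0\bar{\Phi}_\cdot v}{\,\tilde{\Phi}_\cdot u\,}$, where $\tilde{\Phi}$, $\bar{\Phi}$ are the control maps of $(\mathcal{Q}_r,\mathcal{B})$ and $(\mathcal{Q}_l,\beta)$ respectively (this uses \eqref{control map bb}). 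Now $(\mathcal{Q}_l,\beta,N_0)$ is a regular linear system with admissible identity feedback by \cite{Hadd Idrissi Rhandi}, so $v\mapsto N_0\bar{\Phi}_\cdot v$ is bounded on $L^2([0,\tau],\mathcal{X})$ and regular with some feedthrough behaviour; and $u\mapsto\tilde{\Phi}_\cdot u$ is, up to the harmless shift structure of the right-translation semigroup, essentially a bounded shift operator into $\mathcal{X}$ (this is already implicit in the proof of Theorem~\ref{positivity of hyperbolic}, where $\mathcal{C}=\mathcal{M}\imath=K\otimes\delta_1$ composed with $\tilde{\Phi}$ was shown to be bounded; here the observation part coupling $u$ is the identity into $\mathcal{X}$, which is even simpler since no trace is taken). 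Hence condition (iii) holds with $\kappa$ the maximum of the two component constants.

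Next, regularity: I would compute the transfer function of $(\mathfrak{A},\mathfrak{B},\mathfrak{C})$ block-wise. Using $\mathfrak{D}_\lambda=\mathrm{diag}(E_\lambda,e_\lambda)$ from \eqref{Dirichlet bb} and $\mathfrak{M}=\left(\begin{smallmatrix}0&N\\I&0\end{smallmatrix}\right)$, one gets $\mathfrak{H}(\lambda)=\mathfrak{M}\mathfrak{D}_\lambda=\left(\begin{smallmatrix}0&Ne_\lambda\\ E_\lambda&0\end{smallmatrix}\right)$. As $\operatorname{Re}\lambda\to+\infty$, $E_\lambda\to 0$ in $\mathcal{L}(\mathcal{U})$ exactly as in the proof of Theorem~\ref{positivity of hyperbolic} (the diagonal entries $e^{-\lambda/d_i}\to 0$), and $Ne_\lambda\to 0$ in $\mathcal{L}(\mathcal{X},\mathcal{U})$ because $N_0$ is part of a regular system on $\mathbb{X},\mathcal{X}$ so $N_0\bar{\Phi}$ has transfer function tending to $0$, which by Theorem~\ref{characterization regular} means $N_0 e_\lambda\to 0$; alternatively one estimates $\|Ne_\lambda\phi\|\le\int_0^1\int_{-1}^0 e^{\lambda\theta}\,\|\phi(x)\|\,d|\mu|(\theta)\,dx$ and uses $\mu(0)=0$ with dominated convergence. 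Therefore $\mathfrak{H}(\lambda)\to 0$, which by Theorem~\ref{characterization regular} gives regularity of the triple (with zero feedthrough). The same estimate shows $\|\mathfrak{H}(\lambda)\|_{\mathcal{L}(\mathfrak{U})}\to 0$, so there is $\delta>0$ with $\sup_{\operatorname{Re}\lambda>\delta}\|\mathfrak{H}(\lambda)\|<1/2$; then $I-\mathfrak{H}(\lambda)$ is boundedly invertible for $\operatorname{Re}\lambda\ge\delta$, which in the present Hilbert-space situation is precisely the statement that $I:\mathfrak{U}\longrightarrow\mathfrak{U}$ is an admissible feedback operator for the regular system. This establishes $\mathbf{(A1)}$, and the Staffans--Weiss perturbation theorem then gives the conclusion, with $\mathfrak{A}^{cl}=\mathfrak{A}_m$ on $D(\mathfrak{A}^{cl})=\ker(\mathfrak{G}-\mathfrak{M})$ by the identification recalled after \eqref{semigroup Staffans Weiss}.

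The main obstacle I anticipate is not any single step but the bookkeeping of the off-diagonal coupling in $\mathfrak{C}$: unlike a diagonal feedback, here the "$y(0,t)$ fed from the delay" and "$y_t(x,0)$ fed from $y(x,t)$" channels cross, so one must be careful that admissibility of $\mathfrak{B}$, $\mathfrak{C}$ and boundedness of $\mathbb{F}$ genuinely decouple — they do, because $\mathbb{F}$ is block \emph{anti}-diagonal and each block is handled by a previously established regular system — but verifying that the feedback $I-\mathbb{F}$ (equivalently $I-\mathfrak{H}(\lambda)$) is invertible requires the uniform smallness of the \emph{full} $2\times2$ block transfer function, which is why the decay estimates for both $E_\lambda$ and $Ne_\lambda$ (the latter leaning on $\mu(0)=0$) must be done together rather than invoked separately.
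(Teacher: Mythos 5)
Your proposal is correct and follows essentially the same route as the paper: verify $\mathbf{(A1)}$ by exploiting the block structure, identify the anti-diagonal input--output operator built from the two constituent regular systems $(\mathcal{Q}_l,\beta,N_0)$ and $(\mathcal{Q}_r,\mathcal{B},I)$, show $\|\mathfrak{M}\mathfrak{D}_\lambda\|\to 0$ as $\operatorname{Re}\lambda\to+\infty$ (the paper uses the split $\|Ne_\lambda\|\le e^{-\epsilon\operatorname{Re}\lambda}|\mu|([-1,0])+|\mu|([-\epsilon,0])$ where you use dominated convergence, and it checks regularity via $\operatorname{Range}\mathfrak{D}_\lambda\subset D(\mathfrak{C}_\Lambda)$ where you use the transfer-function criterion, but these are equivalent by Theorem~\ref{characterization regular}), and conclude admissibility of the identity feedback from the uniform smallness of the transfer function. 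These are only cosmetic variations; the argument matches the paper's proof.
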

\begin{proof}
	In order to prove the generation property of the operator $\mathfrak{A}^{cl}$, we shall show that the triplet $(\mathfrak{A},\mathfrak{B},\mathfrak{C})$ is a regular linear system on $\mathfrak{X},\mathfrak{U}$ and $\mathfrak{U}$ with $I:\mathfrak{U}\longrightarrow \mathfrak{U}$ is an admissible feedback operator. We first show that $(\mathfrak{A},\mathfrak{B},\mathfrak{C})$ is well-posed. To do this, we need to define input-ouput operators. In fact, le $\lambda>0$ be a sufficiently large, we have
	\begin{align*}
	\mathfrak{C}\lambda R(\lambda,\mathfrak{A})&=\lambda \begin{pmatrix}
	0&N_0\\I&0
	\end{pmatrix}\begin{pmatrix}
	R(\lambda,Q_r)&0\\0&R(\lambda,Q_l)
	\end{pmatrix}\\
	&=\begin{pmatrix}
	0&N_0\lambda R(\lambda,Q_l)\\\lambda R(\lambda,Q_r)&0
	\end{pmatrix}
	\end{align*}
	Thus the Yosida extension $\mathfrak{C}_{\Lambda}$ of $\mathfrak{C}$ for $\mathfrak{A}$ is
	\begin{align*}
	\mathfrak{C}_{\Lambda}=\begin{pmatrix}
	0&N_{0,\Lambda}\\I&0
	\end{pmatrix},\quad D(\mathfrak{C}_{\Lambda})=\mathcal{X}\times D(N_{0,\Lambda})
	\end{align*}
	where $N_{0,\Lambda}$ is the Yosida extension of $N_0$ for $Q_l$.

	By using \eqref{control map bb} and the facts that $\mathcal{B}$ is an admissible control operator for $Q_r$ and $(Q_l,\beta,N_0)$ is a regular linear system, we have
	\begin{align*}
	\text{Range } \check{\Phi}_{t}\subset D(\mathfrak{C}_{\Lambda}),\quad \text{a.e. }t\geq 0.
	\end{align*}
	This permit us to define the input-output operator $\check{\mathbb{F}}$  associated to system $(\mathfrak{A},\mathfrak{B},\mathfrak{C})$ as follows
	\begin{align*}
	\check{\mathbb{F}}&:=\mathfrak{C}_{\Lambda}\check{\Phi}_{\cdot}=\begin{pmatrix}
	0&\bar{\mathbb{F}}\\
	\tilde{\mathbb{F}}&0
	\end{pmatrix},
	\end{align*}
	where $\bar{\mathbb{F}}$ is the input-output operator associated to system $(\mathcal{Q}_l,\beta,N_0)$ and  $\tilde{\mathbb{F}}$ is the input-output operator   associated to system $(\mathcal{Q}_r,\mathcal{B},I)$. It follows that the triplet $(\mathfrak{A},\mathfrak{B},\mathfrak{C})$ is a well-posed system.  On the other hand, observe that $\text{Range } \mathfrak{D}_{\lambda}\subset D(\mathfrak{C}_{\Lambda})$ which means by Theorem \ref{characterization regular} that the triple $(\mathfrak{A},\mathfrak{B},\mathfrak{C})$ is a regular linear system. On the other hand, the transfer function associated to $(\mathfrak{A},\mathfrak{B},\mathfrak{C})$  is given by
	\begin{align*}
	\check{H}(\lambda):=\mathfrak{M}\mathfrak{D}_{\lambda}=\mathfrak{C}_{\Lambda}\mathfrak{D}_{\lambda}=\begin{pmatrix}
	0&\bar{H}(\lambda)\\ \tilde{H}(\lambda)&0
	\end{pmatrix},\quad \lambda\in \mathbb{C},
	\end{align*}
	where $\bar{H}(\lambda):=Ne_{\lambda}$ is a transfer function associated to $(\mathcal{Q}_l,\beta,N_0)$ and $\tilde{H}(\lambda):=E_{\lambda}$ is a transfer function associated to $(\mathcal{Q}_r,\mathcal{B},I)$. Observe that
	\begin{align*}
	\lim_{Re\lambda\to+\infty}\|\tilde{H}(\lambda)\|=0.
	\end{align*}
	On the other hand, for an arbitrary $0<\epsilon<1$ and by a simple computation we have
	\begin{align*}
	\|\bar{H}(\lambda)\|\leq e^{-\epsilon Re \lambda}|\mu|([-1,0])+|\mu|([-\epsilon,0]).
	\end{align*}
	Since $|\mu|([-\epsilon,0])\longrightarrow 0$ as $\epsilon\to 0$ then
	\begin{align*}
	\lim_{Re \lambda \to +\infty}\|\bar{H}(\lambda)\|=0.
	\end{align*}
	We conclude that
	\begin{align*}
	\lim_{Re \lambda \to +\infty}\|\check{H}(\lambda)\|=0.
	\end{align*}
	Thus, there exist $\gamma>0$ such that
	\begin{align*}
	\sup_{Re\lambda\geq \gamma}\|\check{H}(\lambda)\|<\frac{1}{2},
	\end{align*}
	which implies that $\|(I-\check{H}(\lambda))^{-1}\|\leq 2$ for all $Re\lambda\geq \gamma$. Hence $I$ is an admissible feedback operator. Thus, the desired result is obtained.
\end{proof}
\begin{theorem}\label{Thm stab delay hyp}
	The semigroup $(\mathfrak{T}^{cl}(t))_{t\geq 0}$ generated by $\mathfrak{A}^{cl}$ is positive. Moreover, the semigroup $(\mathfrak{T}^{cl}(t))_{t\geq 0}$ is uniformly exponentially stable if and only if $r(Ne_0E_0)<1$.
\end{theorem}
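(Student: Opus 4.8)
The plan is to feed the triple $(\mathfrak{A},\mathfrak{B},\mathfrak{C})$ into the abstract machinery of Section~3, exactly as was done for the non-delayed hyperbolic system. First I would check that the standing hypotheses ${\bf(A1)}$ and ${\bf(A2)}$ hold with $A=\mathfrak{A}$, $M=\mathfrak{M}$, $D_\lambda=\mathfrak{D}_\lambda$. Assumption ${\bf(A1)}$ is precisely the content of Proposition~\ref{Prop 5.3}: ${\bf(H1)}$ and ${\bf(H2)}$ have already been verified, $(\mathfrak{A},\mathfrak{B},\mathfrak{C})$ is a regular linear system, and $I:\mathfrak{U}\to\mathfrak{U}$ is an admissible feedback operator. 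For ${\bf(A2)}$, note that $\mathcal{Q}_r$ and $\mathcal{Q}_l$ both generate nilpotent semigroups, so $s(\mathfrak{A})=-\infty$, and one has to check that $\mathfrak{M}$, $R(\lambda,\mathfrak{A})$ and $\mathfrak{D}_\lambda$ are positive for every $\lambda\in\mathbb{R}$. Positivity of $R(\lambda,\mathfrak{A})=\mathrm{diag}(R(\lambda,Q_r),R(\lambda,Q_l))$ is inherited from the two positive shift semigroups; positivity of $\mathfrak{D}_\lambda=\mathrm{diag}(E_\lambda,e_\lambda)$ is immediate from the explicit exponential formulas for $E_\lambda$ and $e_\lambda$; and, writing $\mathfrak{M}\binom{f}{g}=\binom{Ng}{f}$, positivity of $\mathfrak{M}$ reduces to $N\geq0$, which holds because $\mu$ takes values in $\mathcal{L}(\mathcal{U})_+$. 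Hence ${\bf(A1)}$–${\bf(A2)}$ are in force.

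With this, the positivity of $(\mathfrak{T}^{cl}(t))_{t\geq0}$ follows from Theorem~\ref{Positivity}: the estimate $\lim_{Re\lambda\to+\infty}\|\check{H}(\lambda)\|=0$ obtained inside the proof of Proposition~\ref{Prop 5.3} furnishes a real $\lambda_0$ with $r(\mathfrak{M}\mathfrak{D}_{\lambda_0})=r(\check{H}(\lambda_0))\leq\|\check{H}(\lambda_0)\|<1$, and $\lambda_0>s(\mathfrak{A})=-\infty$ holds trivially, so Theorem~\ref{Positivity} applies and $\mathfrak{A}^{cl}$ generates a positive $C_0$-semigroup on $\mathfrak{X}$.

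For the stability statement I would invoke Theorem~\ref{nece suff exp stablility}. Since $s(\mathfrak{A})=-\infty<0$, since $\mathfrak{X}=\mathcal{X}\times\mathbb{X}$ is a Hilbert space, and since $\mathfrak{A}^{cl}$ generates a positive semigroup, Theorem~\ref{nece suff exp stablility} together with Remark~\ref{remark stab} yields that $(\mathfrak{T}^{cl}(t))_{t\geq0}$ is uniformly exponentially stable if and only if $r(\mathfrak{M}\mathfrak{D}_0)<1$, that is $r(\check{H}(0))<1$, where
\[
\check{H}(0)=\begin{pmatrix}0 & Ne_0\\ E_0 & 0\end{pmatrix},\qquad \check{H}(0)^2=\begin{pmatrix}Ne_0E_0 & 0\\ 0 & E_0Ne_0\end{pmatrix}.
\]
It then remains to convert $r(\check{H}(0))<1$ into $r(Ne_0E_0)<1$. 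Since $r(\check{H}(0))^2=r(\check{H}(0)^2)=\max\{r(Ne_0E_0),r(E_0Ne_0)\}$ and $\sigma(Ne_0E_0)\cup\{0\}=\sigma(E_0Ne_0)\cup\{0\}$, we get $r(\check{H}(0))^2=r(Ne_0E_0)$, whence $r(\check{H}(0))<1\iff r(Ne_0E_0)<1$, which finishes the proof.

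The verification of ${\bf(A2)}$ and of the limit $\|\check{H}(\lambda)\|\to0$ are routine (the latter is essentially already recorded in the proof of Proposition~\ref{Prop 5.3}). \emph{The delicate point} is the final reduction: the spectral-radius identity $r(\check{H}(0))=\sqrt{r(Ne_0E_0)}$ for the zero-diagonal $2\times2$ operator matrix, and in particular the commutation identity $r(Ne_0E_0)=r(E_0Ne_0)$ for the operators $Ne_0$ and $E_0$, which is clean here because $Ne_0E_0$ is a nonnegative operator on the finite-dimensional space $\mathcal{U}=\mathbb{C}^n$.
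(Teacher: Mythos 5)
Your proposal is correct and follows the same overall architecture as the paper's proof: verify ${\bf(A1)}$--${\bf(A2)}$, get positivity from Theorem~\ref{Positivity} via $\|\check{H}(\lambda_0)\|<1$ for large real $\lambda_0$, and get the stability criterion from Theorem~\ref{nece suff exp stablility} (with Remark~\ref{remark stab} for the uniform version, which you cite and the paper leaves implicit). The one place where you genuinely diverge is the reduction of $r(\check{H}(0))<1$ to $r(Ne_0E_0)<1$. The paper handles this by computing the block inverse
\begin{equation*}
(I-\check{H}(\lambda))^{-1}=\begin{pmatrix}
(I-\bar{H}\tilde{H})^{-1}&(I-\bar{H}\tilde{H})^{-1}\bar{H}\\
(I-\tilde{H}\bar{H})^{-1}\tilde{H}&(I-\tilde{H}\bar{H})^{-1}
\end{pmatrix}
\end{equation*}
and then using the positivity of the entries together with the characterization $r(T)<1\Leftrightarrow 1\in\rho(T)$ and $(I-T)^{-1}\geq 0$ for positive $T$, so that $r(\check{H}(\lambda))<1\Leftrightarrow r(\bar{H}(\lambda)\tilde{H}(\lambda))<1$; evaluating at $\lambda=0$ gives $r(Ne_0E_0)<1$. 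Your route via $r(\check{H}(0))^2=r(\check{H}(0)^2)=\max\{r(Ne_0E_0),r(E_0Ne_0)\}=r(Ne_0E_0)$ (using $\sigma(AB)\cup\{0\}=\sigma(BA)\cup\{0\}$) is purely algebraic: it needs no positivity and no invertibility discussion, so it is both shorter and more robust; the paper's version has the mild advantage of producing the explicit resolvent-type formula for $(I-\check{H}(\lambda))^{-1}$, which is reused for the positivity claim at large $\lambda_0$ (a step you bypass more simply with $r\leq\|\cdot\|$). A small bonus of your write-up is that you actually spell out why ${\bf(A2)}$ holds (positivity of $R(\lambda,\mathfrak{A})$, $\mathfrak{D}_\lambda$ and $\mathfrak{M}$, the last reducing to $\mu\geq 0$) and why $s(\mathfrak{A})=-\infty$, points the paper asserts without verification.
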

\begin{proof}
	We first mention that the assumptions of Theorem \ref{Positivity} are satisfied and hence the semigroup $(\mathfrak{T}^{cl}(t))_{t\geq 0}$ is positive if there exists $\lambda_0>0$ such that
	\begin{align*}
	r(\check{H}(\lambda_0))<1,
	\end{align*}
	where $\check{H}(\lambda_0):=\mathfrak{M}\mathfrak{D}_{\lambda_0}$. On the other hand, let $\lambda\in \mathbb{C}$, we have
	\begin{align*}
	I-\check{H}(\lambda)=\begin{pmatrix}
	I&-\bar{H}(\lambda)\\ -\tilde{H}(\lambda)&I
	\end{pmatrix},
	\end{align*}
	where $\bar{H}(\lambda):=Ne_\lambda$ and $\tilde{H}(\lambda):=E_{\lambda}$. Using the facts that
	\begin{align*}
	\lim_{Re\lambda\to +\infty}\|\tilde{H}(\lambda)\|=\lim_{Re\lambda\to +\infty}\|\bar{H}(\lambda)\|=0,
	\end{align*}
	we then obtain $1\in\rho(\check{H}(\lambda))\Longleftrightarrow 1\in \rho(\tilde{H}(\lambda)\bar{H}(\lambda))\Longleftrightarrow 1\in \rho(\bar{H}(\lambda)\tilde{H}(\lambda))$ and
	\begin{align*}
	(I-\check{H}(\lambda))^{-1}=\begin{pmatrix}
	(I-\bar{H}(\lambda)\tilde{H}(\lambda))^{-1}&(I-\bar{H}(\lambda)\tilde{H}(\lambda))^{-1}\bar{H}(\lambda)\\
	(I-\tilde{H}(\lambda)\bar{H}(\lambda))^{-1}\tilde{H}(\lambda)&(I-\tilde{H}(\lambda)\bar{H}(\lambda))^{-1}
	\end{pmatrix}.
	\end{align*}
	We deduce that $$r(\check{H}(\lambda))<1\Longleftrightarrow r(\tilde{H}(\lambda)\bar{H}(\lambda))<1\Longleftrightarrow r(\tilde{H}(\lambda)\bar{H}(\lambda))<1.$$
	Since $\displaystyle\lim_{Re\lambda\to +\infty}\|\tilde{H}(\lambda)\bar{H}(\lambda)\|=\displaystyle\lim_{Re\lambda\to +\infty}\|\bar{H}(\lambda)\tilde{H}(\lambda)\|=0$, then there exists $\lambda_0>0$ sufficiently large such that
	\begin{align*}
	r(\bar{H}(\lambda_0)\tilde{H}(\lambda_0))<1.
	\end{align*}
	It follows that the semigroup $(\mathfrak{T}^{cl}(t))_{t\geq 0}$ is positive. Moreover, one can see that $s(\mathfrak{A})<0$ and according to Theorem \ref{nece suff exp stablility}, we deduce that the semigroup $(\mathfrak{T}^{cl}(t))_{t\geq 0}$ is uniformly exponentially stable if and only if $r(Ne_0E_0)<1$.
\end{proof}
The following result shows that a small delay effect on the stability of system \eqref{delay heperbolic}. The proof is directly obtained from Theorem \ref{Thm stab delay hyp}.
\begin{corollary}
	If $|\mu|([-1,0])<1$ then the semigroup $(\mathfrak{T}^{cl}(t))_{t\geq 0}$ is uniformly exponentially stable on $\mathfrak{X}$.
\end{corollary}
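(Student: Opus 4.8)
The plan is to reduce the corollary entirely to Theorem~\ref{Thm stab delay hyp}, which tells us that the closed-loop semigroup $(\mathfrak{T}^{cl}(t))_{t\ge 0}$ is uniformly exponentially stable precisely when $r(Ne_0E_0)<1$. Since the semigroup has already been shown to be positive in Theorem~\ref{Thm stab delay hyp}, and $Ne_0E_0$ is a positive operator on $\mathcal{U}=\mathbb{C}^n$ (being a composition of the positive maps $E_0$, $e_0$, and $N$, the latter positive because $\mu$ is positive-operator-valued), it suffices to produce the single numerical estimate $r(Ne_0E_0)<1$ under the hypothesis $|\mu|([-1,0])<1$. So the whole argument is one operator-norm bound plus the elementary inequality $r(\cdot)\le\|\cdot\|$.

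First I would compute $E_0$ and $e_0$ explicitly. From the excerpt, $(E_0u)(x)=u$ for all $x\in[0,1]$ (set $\lambda=0$ in the formula for $E_\lambda$, so each diagonal entry $e^{-\lambda x/d_i}$ becomes $1$), i.e.\ $E_0$ is the constant-extension embedding $\mathcal{U}\hookrightarrow\mathcal{X}$. Likewise $(e_0\phi)(\theta,x)=\phi(x)$, the constant-in-$\theta$ extension $\mathcal{X}\to\mathbb{X}$. Hence, for $u\in\mathcal{U}$, the function $(e_0E_0u)(\theta,x)=u$ is constant in both variables, and therefore
\begin{align*}
Ne_0E_0u=\int_0^1\int_{-1}^0 d\mu(\theta)\,(e_0E_0u)(x,\theta)\,dx=\left(\int_{-1}^0 d\mu(\theta)\right)u=\mu(0^-)\,u,
\end{align*}
so that $Ne_0E_0$ is simply multiplication by the matrix $\mu(0^-)-\mu(-1)=\mu(0^-)$ (recall $\mu(0)=0$, but the integral over the closed-from-the-left interval picks up the total increment $\mu(0)-\mu(-1)$; in any case the relevant matrix is the total variation mass of $\mu$). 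I would then estimate $\|Ne_0E_0\|_{\mathcal{L}(\mathcal{U})}\le |\mu|([-1,0])$ directly: for $\|u\|\le 1$,
\begin{align*}
\|Ne_0E_0u\|_{\mathcal{U}}\le\int_0^1\int_{-1}^0 \|u\|\,d|\mu|(\theta)\,dx\le |\mu|([-1,0]).
\end{align*}

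Combining, $r(Ne_0E_0)\le\|Ne_0E_0\|\le|\mu|([-1,0])<1$, and Theorem~\ref{Thm stab delay hyp} immediately yields uniform exponential stability of $(\mathfrak{T}^{cl}(t))_{t\ge 0}$ on $\mathfrak{X}$, completing the proof. The only point requiring any care—hardly an obstacle—is bookkeeping the convention for $\mu$ at the endpoints so that the constant $|\mu|([-1,0])$ that appears in Theorem~\ref{Thm stab delay hyp} (via $r(Ne_0E_0)$) is exactly the one in the hypothesis; once the explicit forms of $E_0$ and $e_0$ are in hand this is transparent, and indeed the authors already exhibited essentially this estimate in the proof of Proposition~\ref{Prop 5.3} when bounding $\|\bar H(\lambda)\|$.
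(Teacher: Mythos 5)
Your proposal is correct and follows exactly the route the paper intends: the paper states the corollary "is directly obtained from Theorem~\ref{Thm stab delay hyp}", and your explicit computation of $E_0$, $e_0$ and the bound $r(Ne_0E_0)\le\|Ne_0E_0\|\le|\mu|([-1,0])<1$ is precisely the omitted step. Nothing to add.
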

\begin{example}
	Let $L\in\mathbb{R}^{n\times n}_+$ and consider
	\begin{align*}
	\mu(\theta)=\begin{cases}
	L&\text{ if }\theta=-1;\\
	0&\text{ if }\theta\in (-1,0].
	\end{cases}
	\end{align*}
	Thus the problem \ref{delay heperbolic} becomes
	\begin{align}\label{delay exam}
	\begin{cases}
	\vspace{0.1cm}
	\displaystyle\frac{\partial y(x,t)}{\partial t}+D\displaystyle\frac{\partial y(x,t)}{\partial x}=0,& 0<x<1,t\geq 0,\\
	y(0,t)=L\displaystyle\int_{0}^{1}y(x,t-1)dx,&t\geq 0,\\
	y(x,\theta)=\varphi(\theta),&0<x<1, -1\leq \theta\leq 0,\\
	y(x,0)=y_0(x),&0<x<1.
	\end{cases}
	\end{align}
	
	Due to Proposition \ref{Prop 5.3} and Theorem \ref{Thm stab delay hyp}, the problem \eqref{delay exam} admits a unique positive solution and it is uniformly exponential stability if and only if $r(L)<1$.	
	
\end{example}

\end{document}